\DeclarePairedDelimiterX{\infdivx}[2]{(}{)}{%
  #1\;\delimsize\|\;#2%
}
\newcommand{\infdiv}{D\infdivx}
\newlength\mylen
\newcommand\myinput[1]{%
  \settowidth\mylen{\KwIn{}}%
  \setlength\hangindent{\mylen}%
  \hspace*{\mylen}#1\\}
\renewcommand{\d}{{\bf d}}
\newcommand{\p}{{\bf p}}
\renewcommand{\u}{{\bf u}}
\newcommand{\x}{{\bf x}}
\newcommand{\X}{{\bf X}}
\newcommand{\e}{{\bf e}}
\newcommand{\y}{{\bf y}}
\newcommand{\z}{{\bf z}}
\def\Xm{{\bf X}}
\newcommand{\D}{{\mathcal D}}
\newcommand{\G}{{\mathcal G}}
\newcommand{\F}{{\mathcal F}}
\newcommand{\R}{{\mathbb{R}}}
\newtheorem{theorem}{Theorem}
\newtheorem{lemma}[theorem]{Lemma}
\newtheorem{assumption}{Assumption}
\newtheorem{definition}{Definition}
\newtheorem*{theorem*}{Theorem}
\newtheorem*{lemma*}{Lemma}
\newtheorem*{corollary*}{Corollary}
\DeclarePairedDelimiter\ceil{\lceil}{\rceil}
\newcommand{\cm}{{\kappa}} % cosine measure
\DeclareMathOperator*{\argmax}{arg\,max}
\DeclareMathOperator*{\argmin}{arg\,min}
\newtcolorbox{mybox}[2][]
{
  breakable,
  colframe = #2!25,
  colback  = #2!25!white!25,
  left = 0.1mm,
  right=0.1mm,
  #1
}
\title{Direct-Search for a Class of Stochastic Min-Max Problems}
\author{
    \normalsize
    Sotiris Anagnostidis \\
    \normalsize
    Institute for Machine Learning\\
    \normalsize
    ETH Zürich\\
    \texttt{\small sotirios.anagnostidis@inf.ethz.ch}
  \and
  \normalsize
    Aurelien Lucchi \\
    \normalsize
    Institute for Machine Learning\\
    \normalsize
    ETH Zürich\\
    \texttt{\small aurelien.lucchi@inf.ethz.ch}
    \and
    \normalsize
    Youssef Diouane \\
    \normalsize
    ISAE-SUPAERO\\
    \normalsize
    Université de Toulouse, France\\
    \texttt{\small youssef.diouane@isae-supaero.fr}
}
\date{}
\begin{document}
\maketitle

\begin{abstract}
Recent applications in machine learning have renewed the interest of the community in min-max optimization problems. While gradient-based optimization methods are widely used to solve such problems, there are however many scenarios where these techniques are not well-suited, or even not applicable when the gradient is not accessible. We investigate the use of direct-search methods that belong to a class of derivative-free techniques that only access the objective function through an oracle. In this work, we design a novel algorithm in the context of min-max saddle point games where one sequentially updates the min and the max player. We prove convergence of this algorithm under mild assumptions, where the objective of the max-player satisfies the Polyak-\L{}ojasiewicz (PL) condition, while the min-player is characterized by a nonconvex objective. Our method only assumes dynamically adjusted accurate estimates of the oracle with a fixed probability. To the best of our knowledge, our analysis is the first one to address the convergence of a direct-search method for min-max objectives in a stochastic setting.
\end{abstract}

%%%%%%%%%%%%%%%%%%%%%%%%%%%%%%%%%%%%%%%%%%%%%%%%%%%%%%%%%%%%
%%%%%%%%%%%%%%%%%%%%%%%%%%%%%%%%%%%%%%%%%%%%%%%%%%%%%%%%%%%%

\section{Introduction}
\label{sec:introduction}

Recent applications in the field of machine learning, including generative models~\citep{goodfellow2014generative} or robust optimization~\citep{ben2009robust}, have triggered significant interest for the optimization of min-max functions of the form
\begin{align}
\min_{\x \in \mathcal{X}} \max_{\y \in \mathcal{Y}} f(\x, \y) = \mathbb{E}[\tilde{f}(\x, \y, \xi)],
\label{eq:min-max}    
\end{align}
where $\xi$ is a random variable characterized by some  distribution. In machine learning, $\xi$ is for instance often drawn from a distribution that depends on the training data.

In practice, min-max problems are often solved using gradient-based algorithms, especially simultaneous gradient descent ascent (GDA) that simply alternates between a gradient descent step for $\x$ and a gradient ascent step for $\y$. While these algorithms are attractive due to their simplicity, there are however cases where the gradient of the objective function is not accessible, such as when modelling distributions with categorical variables~\citep{jang2016categorical}, tuning hyper-parameters~\citep{audet2006finding, marzat2011min} and multi-agent reinforcement learning with bandit feedback~\citep{zhang2019multi}. A resurgence of interest has recently emerged for applications in black-box optimization~\citep{bogunovic2018adversarially, liu2019min} and black-box poisoning attack~\citep{liu2020min}, where an attacker deliberately modifies the training data in order to tamper with the model's predictions. This can be formulated as a min-max optimization problem, where only stochastic accesses to the objective function are available~\citep{wang2020zeroth}.

In this work, we investigate the use of direct-search methods to optimize min-max objective functions without requiring access to the gradients of the objective $f$. Direct-search methods have a long history in the field of optimization, dating back from the seminal paper of~\cite{hooke1961direct}. The appeal of these methods is due to their simplicity but also potential ability to deal with non-trivial objective functions. Although there are variations among random search techniques, most of them can be summarized conceptually as sampling random directions from a search space and moving towards directions that decrease the objective function value. We note that these techniques are sometimes named derivative-free methods, but it is important to distinguish them from other techniques that try to estimate derivatives based on finite difference~\citep{spall2003stochastic} or smoothing~\citep{nesterov2017random}. We refer the reader to the surveys by~\cite{lewis2000direct, rios2013derivative} for a comprehensive review of direct-search methods.

Solving the saddle point problem~\eqref{eq:min-max} is equivalent to finding a saddle point\footnote{In the game theory literature, such point is commonly
referred to as (global) Nash equilibrium, see e.g.~\cite{liang2018interaction}.}  $(\x^*, \y^*)$ such that
\begin{equation*}
f(\x^*, \y) \leq f(\x^*, \y^*) \leq f(\x, \y^*) \quad \forall \x \in \mathcal{X}, \quad \forall \y \in \mathcal{Y}.
\end{equation*}
There is a rich literature on saddle point optimization
for the particular class of convex-concave functions (i.e.
when $f$ is convex in $\x$ and concave in $\y$) that are differentiable. Although this type of objective function is commonly encountered in applications such as constrained convex minimization, many saddle point problems of interest do not satisfy the convex-concave assumption. This for instance includes applications such as Generative Adversarial Networks (GANs)~\citep{goodfellow2014generative}, robust optimization~\citep{ben2009robust, bogunovic2018adversarially} and multi-agent reinforcement learning~\citep{omidshafiei2017deep}. For min-max problems without access to derivatives, the literature is in fact very scarce. Most existing techniques such as~\cite{hare2013derivative, hare2013bderivative, custodio2019worst} consider finite-max functions, i.e., functions of the form $f(\x) = \max \{ f_i(\x) : i = 1, 2, \ldots , N \}$ where $N>0$ is finite and each $f_i$ is continuously differentiable. Other techniques such as~\cite{bertsimas2010robust, bertsimas2010brobust} are restricted to functions $f$ that are convex with respect to $\x$ or only provide asymptotic convergence analysis~\citep{menickelly2020derivative}. We refer the reader to Section~\ref{sec:related_work} for a more detailed discussion of prior approaches.

Motivated by a wide range of applications, we therefore focus on a nonconvex and nonconcave stochastic setting where the $\max$ player satisfies the PL condition (see Definition~\ref{def:PL-condition} in Section~\ref{sec:convergence-min}), which is known to be a weaker assumption compared to convexity~\citep{karimi2015linear}. In summary, our main contributions are:
\begin{itemize}
    \item We design a novel direct-search algorithm for such $\min$-$\max$ problems and provide non-asymptotic convergence guarantees in terms of first-order Nash equilibrium. Concretely, we prove convergence to an $\epsilon$-first-order Nash Equilibrium (for a definition see Section~\ref{sec:preliminaries}) in $\mathcal{O}(\epsilon^{-2}\log(\epsilon^{-1}))$ iterations, which is comparable to the rate achieved by gradient-based techniques~\citep{nouiehed2019solving}.
    \item We derive theoretical convergence guarantees in a stochastic setting where one only has access to accurate estimates of the objective function, with some fixed probability.
    We prove our results for the case where the min player optimizes a nonconvex function while the max player optimizes a PL function.
    %We prove results for the both a nonconvex function, as our assumption for the min player, and for a function satisfying the PL condition, as our assumption for the max player.
    \item We validate empirically our theoretical findings, including settings where derivatives are not available.
\end{itemize}

%%%%%%%%%%%%%%%%%%%%%%%%%%%%%%%%%%%%%%%%%%%%%%%%%%%%%%%%%%%%
%%%%%%%%%%%%%%%%%%%%%%%%%%%%%%%%%%%%%%%%%%%%%%%%%%%%%%%%%%%%

\section{Related Work}
\label{sec:related_work}

\paragraph{Direct-search methods for minimization problems}
The general principle behind direct-search methods is to optimize a function $f(\x)$ without having access to its gradient $\nabla f(\x)$. There is a large number of algorithms that are part of this broad family including golden-section search techniques or random search~\citep{rastrigin1963convergence}. Among the most popular algorithms in machine learning are evolution strategies and population-based algorithms that have demonstrated promising results in reinforcement learning~\citep{salimans2017evolution, maheswaranathan2018guided} and bandit optimization~\citep{flaxman2004online}. At a high-level, these techniques work by maintaining a distribution over parameters and duplicate the individuals in the population with higher fitness. Often these algorithms are initialized at a random point and then adapt their search space, depending on which area contains the best samples (i.e. the lowest function value when minimizing $f(\x)$). New samples are then generated from the best regions in a process repeated until convergence. The most well-known algorithms that belong to this class are evolutionary-like algorithms, including for instance CMA-ES~\citep{hansen2003reducing}. Evolutionary strategies have recently been shown to be able to solve various complex tasks in reinforcement learning such as Atari games or robotic control problems, see e.g.~\cite{salimans2017evolution}. Their advantages in the context of reinforcement learning are their reduced sensitivity to noisy or uninformative gradients (potentially increasing their ability to avoid local minima~\citep{conti2017improving}) and the ease with which one can implement a distributed or parallel version. 

\paragraph{Convergence guarantees for direct-search methods}
Proofs of convergence for direct-search methods are based on a specific construction for the sampling directions, often that they positively span the whole search space~\citep{conn2009introduction}, or that they are dense in certain types of directions (known as refining directions) at the limit point~\citep{audet2006mesh}. In addition, they also typically rely on the use of a forcing function that imposes each new selected iterate to decrease the function value adequately. This technique has been analyzed in~\cite{vicente2013worst} who proved convergence under mild assumptions in $\mathcal{O}(\epsilon^{-2})$ iterations for the goal $\| \nabla f(\x) \| < \epsilon$. The number of required steps is reduced to $\mathcal{O}(\epsilon^{-1})$ for convex functions $f$, and to $\mathcal{O}(\log(\epsilon^{-1}))$ for strongly-convex functions~\citep{konevcny2014simple}. This is on par with the steepest descent method for unconstrained optimization \citep{nesterov2013introductory} apart from some constants that depend on the dimensionality of the problem.

\paragraph{Stochastic estimates of the function}
In our analysis, we only assume access to stochastic estimates of the objective function.
\begin{equation}
    f(\x, \y) = \mathbb{E}[\tilde{f}(\x, \y, \xi)],
    \label{eq:stochastic_fn}
\end{equation}
where $\xi$ is a random variable that captures the randomness of the objective function. The origin of the noise could be privacy related, or caused by a noisy adversary. Most commonly, it might arise from online streaming data, distributed and batch-sized updates due to the sheer size of the problem. Stochastic gradient descent is often used to optimize Eq.~\eqref{eq:stochastic_fn}, where one often assumes access to accurate estimates of $f$ and consider updates only in expectation~\citep{johnson2013accelerating}. To establish similar convergence rates to the deterministic case, an alternative solution consists of adapting the accuracy of these estimates dynamically, which can be ensured by averaging multiple samples together. This approach has for instance been analyzed in the context of trust-region methods~\citep{blanchet2019convergence} and line-search methods~\citep{paquette2018stochastic,bergou2019stochastic}, including direct-search for the minimization of nonconvex functions~\citep{dzahini2020expected}.

\paragraph{Algorithms for finding equilibria in games}
Since the pioneering work of von~\cite{neumann1928theorie}, equilibria in games have received great attention. Most past results focus on convex-concave settings~\citep{chen2014optimal, hien2017inexact}. Notably,~\cite{cherukuri2017saddle} studied convergence of the GDA algorithm under strictly convex-concave assumptions. For problems where the function does not satisfy this condition however, convergence to a saddle point is not guaranteed. More recent results focus on relaxing these conditions. The work of~\cite{nouiehed2019solving} analyzed gradient descent-ascent under a similar scenario, where the objective of the max player satisfies the PL condition and where the min player optimizes a nonconvex objective.~\cite{ostrovskii2020efficient, wang2020zeroth} analyze a nonconvex-concave class of problems, while~\cite{lin2020gradient} present a two-scale variant of the GDA algorithm for a similar scenario, providing a replacement for the alternating updates scheme.

We take inspiration from the work of~\cite{liu2019min, nouiehed2019solving, sanjabi2018convergence} to design a novel alternating direct-search algorithm, where the inner maximization problem is solved almost exactly before performing a single step towards improving the strategy of the minimization player. We are able to prove convergence of our direct-search algorithm under this procedure, which has been proven to be more stable than the analogous simultaneous one, as rigorously shown in~\cite{gidel2018negative} and \cite{zhang2019convergence} for a variety of algorithms.

%%%%%%%%%%%%%%%%%%%%%%%%%%%%%%%%%%%%%%%%%%%%%%%%%%%%%%%%%%%%
%%%%%%%%%%%%%%%%%%%%%%%%%%%%%%%%%%%%%%%%%%%%%%%%%%%%%%%%%%%%

\section{Preliminaries} \label{sec:preliminaries}

Throughout, we use $\|.\|$ to denote the Euclidean norm; that is, for $\x \in \R^n$ we have $\| \x \| = \sqrt{\x^\intercal \x}$.

\subsection{Min-Max Games}

We consider the optimization problem defined in Eq.~\eqref{eq:min-max} for which a common notion of optimality is the concept of Nash equilibrium as mentioned previously, which is formally defined as follows.

\begin{definition}
We say that a point $(\x^*, \y^*) \in \mathcal{X} \times \mathcal{Y}$ is a Nash equilibrium of the game if
\begin{equation*}
    f(\x^*, \y) \leq f(\x^*, \y^*) \leq f(\x, \y^*) \quad \forall \x \in \mathcal{X}, \quad \forall \y \in \mathcal{Y}.
\end{equation*}
\end{definition}
A Nash equilibrium is a point where the change of strategy of each player individually does not lead to an improvement from her viewpoint. Such a Nash equilibrium point always exists for convex-concave games~\citep{jin2019minmax}, but not necessarily for nonconvex-nonconcave games. Even when they exist, finding Nash equilibria is known to be a NP-hard problem, which has led to the introduction of local characterizations as discussed in~\cite{jin2019minmax, adolphs2018local}. Here we use the notion of a first-order Nash equilibrium (FNE) (for a definition we refer to~\cite{pang2016unified}). We focus on the problem of converging to such a FNE point, or an approximate FNE defined as follows (adapted from \cite{nouiehed2019solving} in the absence of constraints).

\begin{definition}
For a function $f: \R^n \times \R^m \to \R$, a point $(\x^*, \y^*) \in \R^n \times \R^m$ is said to be an $\epsilon$-first-order Nash Equilibrium ($\epsilon$-FNE) if:
$\| \nabla_{\x} f(\x^*, \y^*) \| \leq \epsilon$ and $\| \nabla_{\y} f(\x^*, \y^*) \| \leq  \epsilon$.
\end{definition}

\subsection{Direct-Search Methods}

\paragraph{Spanning set}
Direct-search methods typically rely on the smoothness of the objective function, which we denote by $f: \R^n \to \R$ in this section\footnote{When using the function $f$ with one set of variables, we consider the minimization problem. When using two sets of variables, we instead consider the min-max problem as defined in Eq.~\eqref{eq:min-max}.}, and on appropriate choice of sampling points to prove convergence. The key idea to guarantee convergence is that one of the sampled directions will form an acute angle with the negative gradient. This can be ensured by sampling from a Positive Spanning Set (PSS). The quality of a spanning set $\D$ is typically measured using a notion of cosine measure defined as

\begin{equation}
\cm(\D) = \min_{\textbf{0} \neq \u \in \R^n} \max_{\d \in \D} \frac{\u^T \d}{\| \u \| \| \d \|}.
\end{equation}

In the following, we will consider positive spanning sets such that $\cm(\D) \geq \kappa_{\min} > 0$ and $d_{\min} \leq \| \d \| \leq d_{\max}, \, \forall \d \in \D$. These assumptions require $|\D| \geq n + 1$. Common choices are i) the positive and negative orthonormal bases $\D = [I_n -I_n] = [\e_1, \dots, \e_n, -\e_1, \dots, -\e_n]$ of size $|\D| = 2n$, ii) a minimal positive basis with uniform angles of size $|\D| = n + 1$ (see Corollary 2.6 of~\cite{conn2009introduction} and~\cite{kolda2003optimization}) or iii) even rotations of these matrices~\citep{gratton2016second}.

\paragraph{Forcing function}
Another critical component to guarantee that the function value decreases at each step appropriately is a forcing function $\rho$ that satisfies $\frac{\rho(\sigma)}{\sigma} \to 0$ when $\sigma \to 0$. Given such $\sigma$, direct-search methods sample new points according to the rule
\begin{equation}
\x^{\prime} = \x + \sigma \d,
\end{equation}
and accept points for which
\begin{equation}
     f(\x^{\prime}) < f(\x) - \rho(\sigma), \label{eq:sufficient-decrease}
\end{equation}
$\d \in \D$. If the previous condition holds for some $\d \in \D$, then the new point is accepted, the step is deemed successful and the $\sigma$ parameter is increased, otherwise $\sigma$ is decreased and the above process is repeated. We use a parameter $\gamma$ to indicate these updates of the step size. For convenience and without loss of generality, we will only consider spanning sets with vectors of unitary length $d_{\min} = d_{\max} = 1$ and a forcing function 
\begin{equation*}
    \rho(\sigma) = c \sigma^2.
\end{equation*} 
The direct-search scheme is displayed in Algorithm~\ref{algo:direct-search}.

\begin{algorithm}
\SetAlgoLined
\caption{{\sc Direct-search($f, \x_0, c, T$)}}
\label{algo:direct-search}
\KwIn{$f$: objective function, with $f_k$ it's estimate at step $k$}
\myinput{$c$: forcing function constant}
\myinput{$T$: number of steps}
Initialize step size value $\sigma_0$.
Choose $\gamma > 1$. Create the Positive Spanning Set $\D$.

\For{k = 0, \dots, T - 1}{
\textbf{1. Offspring generation:}\\
Generate the points $ \x^i = \x_{k} + \sigma_{k} \d^i, \quad \forall \d^i \in \D.$ \\
\textbf{2. Parent Selection:} \\
Choose $\x^{\prime} = \argmin_i f_k(\x^i)$.\\

\textbf{3. Sufficient Decrease:}

\uIf{ $f_k(\x^{\prime}) < f_k(\x_k) - \rho(\sigma_k)$ }{
    (Iteration is successful)\\
    Update and increase step size
    $\x_{k + 1} = \x^{\prime}, \sigma_{k + 1} = \min\{\sigma_{\max}, \gamma \sigma_k\}$.
}\Else{
    (Iteration is unsuccessful)\\
    Decrease step size 
    $\x_{k + 1} = \x_k$, $\sigma_{k + 1} = \gamma^{-1} \sigma_k$.
}}
\Return  $\x_{T}$
\end{algorithm}

\section{Stochastic Direct-Search} \label{sec:convergence-min}
% \vspace{-2mm}
The full algorithm we analyze to solve the min-max objective is presented in Algorithm~\ref{algo:min-max-DR}. It consists of two steps: i) first solve the maximization problem w.r.t. the $\y$ variable using Algorithm~\ref{algo:direct-search}, and ii) perform one update step for the $\x$ variable. In this section, we first analyze the convergence properties of Algorithm~\ref{algo:direct-search} in the setting where we only have access to estimates of the objective function $f$,
\begin{equation*}
    f(\x) = \mathbb{E}[\tilde{f}(\x, \xi)].
\end{equation*}
Let $(\Omega, \F, P)$ be a probability space with elementary events denoted with $\omega$. We denote the random quantities for the iterate by $\x_k = \X_k(\omega)$ and for the step size by $\sigma_k = \Sigma_k(\omega)$. Similarly let $\{F_k^0, F_k^{\sigma}\}$ be the estimates of $f(\X_k)$ and $f(\X_k + \Sigma_k \d_k)$, for each $\d_k$ in a set $\D$, with their realizations $f_k^0 = F_k^0(\omega)$, $f_k^{\sigma} = F_k^{\sigma}(\omega)$. At each iteration the influence of the noise on function evaluations is random. We will assume that, when conditioned on all the past iterates, these estimates are sufficiently accurate with a sufficiently high probability. We formalize this concept in the two definitions below.
%, for a series of point evaluations $\x_k, \, k \geq 0$. 

\begin{definition} ($\epsilon_f$-accurate) \label{def:accurate}
The estimates $\{F_k^0, F_k^{\sigma}\}$ are said to be $\epsilon_f$-accurate with respect to the corresponding sequence if
\begin{equation*}
    | F_k^0 - f(\X_k) | \leq \epsilon_f \Sigma_k^2 \, \text{ and } \, | F_k^{\sigma} - f(\X_k + \Sigma_k \d_k) | \leq \epsilon_f \Sigma_k^2.
\end{equation*}
\end{definition}

\begin{definition} ($p_f$-probabilistically $\epsilon_f$-accurate) \label{def:probabilistically-accurate}
The estimates $\{F_k^0, F_k^{\sigma}\}$ are said to be $p_f$-probabilistically $\epsilon_f$-accurate with respect to the corresponding sequence if the events
\begin{equation*}
  J_k = \{\text{The estimates $\{F_k^0, F_k^{\sigma}\}$ are $\epsilon_f$-accurate}\}  
\end{equation*}
satisfy the condition\footnote{We use $1_A$ to denote the indicator function for the set $A$ and $A^c$ to denote its complement.}
\begin{equation*}
    P(J_k \mid \F_{k-1}) = \mathbb{E}[{1_{J_k} \mid \F_{k-1}}] \geq p_f,
\end{equation*}
where $\F_{k-1}$ is the sigma-algebra generated by the sequence $\{ F_0^0, F_0^{\sigma},\dots, F_{k-1}^0, F_{k-1}^{\sigma}\}$.

\end{definition}

As the step size $\sigma$ gets smaller, meaning that we are getting closer to the optimum, we require the accuracy over the function values to increase. However, the probability to encounter a good estimation remains the same throughout. A significant challenge arises, as steps may satisfy our sufficient decrease condition specified in Eq.~\eqref{eq:sufficient-decrease} falsely, leading to a potential increase in terms of the objective value. This increase can potentially be very large, leading to divergence, and we therefore need to require an additional assumption regarding the variance of the error.

\begin{assumption} \label{ass:variance-condition}
The sequence of estimates $\{F_k^0, F_k^{\sigma}\}$ are said to satisfy a $l_f$-variance condition if for all $k\geq 0$
\begin{align*}
    \mathbb{E}[|F_k^0 - f(\X_k)|^2 \mid \F_{k-1}] &\leq l_f^2 \Sigma_k^4, \\
    \mathbb{E}[|F_k^{\sigma} - f(\X_k + \Sigma_k \d_k)|^2 \mid \F_{k-1}] &\leq l_f^2 \Sigma_k^4.
\end{align*}
\end{assumption}

Based on the above assumptions, we reach the following conclusion regarding inaccurate steps (similar to Lemma 2.5 in~\cite{paquette2018stochastic}).

\begin{mybox}{gray}
\begin{lemma} \label{lemma:bound-absolute}
Let Assumption~\ref{ass:variance-condition} hold for $p_f$-probabilistically $\epsilon_f$-accurate estimates of a function. Then for $k \geq 0 $ we have
\begin{align*}
    &\mathbb{E}[1_{J_k^c}|F_k^0 - f(\X_k)| \mid \F_{k-1}] \leq (1 - p_f)^{1/2} l_f \Sigma_k^2, \\
    &\mathbb{E}[1_{J_k^c}|F_k^{\sigma} - f(\X_k + \Sigma_k \d_k)| \mid \F_{k-1}] \\ &\qquad\qquad \leq (1 - p_f)^{1/2} l_f \Sigma_k^2.
\end{align*}
\end{lemma}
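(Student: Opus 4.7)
The plan is to apply the conditional Cauchy--Schwarz inequality to decouple the indicator $1_{J_k^c}$ from the error magnitude $|F_k^0 - f(\X_k)|$, and then to control each factor separately using the two standing hypotheses. Concretely, I would write
\begin{equation*}
\mathbb{E}\bigl[1_{J_k^c}\,|F_k^0 - f(\X_k)|\,\big|\,\F_{k-1}\bigr]
\;\le\;
\bigl(\mathbb{E}[1_{J_k^c}\mid \F_{k-1}]\bigr)^{1/2}
\bigl(\mathbb{E}[|F_k^0 - f(\X_k)|^2\mid \F_{k-1}]\bigr)^{1/2},
\end{equation*}
using that $1_{J_k^c}^2 = 1_{J_k^c}$, so the first factor equals $P(J_k^c \mid \F_{k-1})^{1/2}$.

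Next I would bound each factor. By Definition~\ref{def:probabilistically-accurate}, $P(J_k \mid \F_{k-1}) \ge p_f$, so $P(J_k^c \mid \F_{k-1}) \le 1 - p_f$, giving the first factor $\le (1-p_f)^{1/2}$. For the second factor, Assumption~\ref{ass:variance-condition} directly yields $\mathbb{E}[|F_k^0 - f(\X_k)|^2 \mid \F_{k-1}] \le l_f^2 \Sigma_k^4$, so that the factor is at most $l_f \Sigma_k^2$. Multiplying gives the first claimed inequality. The second inequality, involving $F_k^\sigma$ and $f(\X_k + \Sigma_k \d_k)$, follows by an identical argument since the variance condition holds in the same form for that pair and the same event $J_k$ controls both estimates simultaneously.

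There is no serious obstacle here; the only small point to be careful about is that $\Sigma_k$ is $\F_{k-1}$-measurable (being determined by the past iterates $\X_0,\Sigma_0,\dots,\X_k$ and the updates specified in Algorithm~\ref{algo:direct-search}), so that the factor $\Sigma_k^4$ on the right-hand side of the variance bound passes through the conditional expectation cleanly, and likewise the direction $\d_k$ used at step $k$ is determined by $\F_{k-1}$ together with the estimates that generate the event $J_k$. With that measurability observation in place, the proof is a one-line application of Cauchy--Schwarz to the product of $1_{J_k^c}$ and the error, followed by substitution of the two bounds.
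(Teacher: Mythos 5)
Your proof is correct and follows essentially the same route as the paper: a conditional Cauchy--Schwarz (H\"older) inequality applied to $1_{J_k^c}\,|F_k^0 - f(\X_k)|$, with the first factor bounded via $P(J_k^c \mid \F_{k-1}) \leq 1 - p_f$ and the second via Assumption~\ref{ass:variance-condition}. The added remark on the $\F_{k-1}$-measurability of $\Sigma_k$ is a harmless (indeed welcome) precision, not a different argument.
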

\end{mybox}

\paragraph{Computing the estimates}
In order to satisfy Assumption~\ref{ass:variance-condition} we can perform multiple function evaluations and average them out (see for instance~\cite{tropp2015introduction}). We therefore get an estimate $F_k^0 = \frac{1}{|S_k^0|}\sum_{\xi_i \in S_k^0} \tilde{f}(\X_k, \xi_i)$, where $S_k^0, S_k^{\sigma}$ correspond to independent samples for $F_k^0$ and $F_k^{\sigma}$ respectively. Assuming bounded variance, i.e. $\mathbb{E}[|\tilde{f}(\x, \xi) - f(\x)|^2] \leq \sigma_f^2$, known concentration results (see e.g.~\cite{tripuraneni2018stochastic, chen2018stochastic}) guarantee that we can obtain $p_f$-probabilistically $\epsilon_f$-accurate estimates for 
\begin{equation*}
    |S_k^0| \geq \mathcal{O}(1) \left( \frac{\sigma_f^2}{\epsilon_f^2 \Sigma_k^4}\log \left(\frac{1}{1 - p_f} \right) \right)
\end{equation*}
number of evaluations (the same result holds for $S_k^{\sigma}$). To also satisfy Assumption~\ref{ass:variance-condition}, we additionally require $|S_k^0| \geq \frac{\sigma_f^2}{l_f \Sigma_k^4}$.

\subsection{Convergence of Stochastic Direct-Search}

In order to study the convergence properties of Algorithm~\ref{algo:direct-search}, we introduce the following (random) Lyapunov function:
\begin{equation*}
    \Phi_k = v (f(\X_k) - f^*) + (1 - v) \Sigma_k^2,
\end{equation*}
where $v \in (0,1)$ is a constant. We denote by $f^*$ the minimum of the function $f$, assumed to exist and potentially achieved at multiple positions. The Lyapunov function $\Phi_k$ will be used to track the progress of the gradient norm $\| \nabla f(\X_k) \|$, which will serve as a measure of convergence. 

Theorem~\ref{theorem:pfi-k-nonconvex} presented below ensures that the Lyapunov function decreases over iterations. Using this result, one can guarantee that the sequence of step-sizes decreases and then exploit the fact that for sufficiently small step sizes (and accurate estimates), the steps are successful, i.e. they decrease the objective function. The proof of the next Theorem is mainly inspired by~\cite{dzahini2020expected, audet2019stomads}.

\begin{mybox}{gray}
\begin{theorem} \label{theorem:pfi-k-nonconvex}
Let a function $f$ with a minimum value $f^*$, with Lipschitz continuous gradients with a constant $L$. Let also $f$ be $p_f$-probabilistically $\epsilon_f$-accurate, while also having bounded noise variance according to Assumption~\ref{ass:variance-condition} with constant $l_f$. Then:
\begin{equation}
    \mathbb{E}[\Phi_{k + 1} - \Phi_k \mid \F_{k-1}] \leq - p_f (1 - v) (1 - \frac{1}{\gamma^2}) \frac{\Sigma_k^2}{2}.
\end{equation}
The constants $c$, $v$ and $p_f$ should satisfy
\begin{equation*}
    c - 2 \epsilon_f > 0 \text{, } \quad \frac{p_f}{\sqrt{1 - p_f}} \geq \frac{4v l_f}{(1 - v) (1 - \gamma^{-2})} \text{,}
\end{equation*}
\begin{equation*}
    \frac{v}{1 - v} \geq \frac{1}{c - 2 \epsilon_f}(\gamma^2 - \frac{1}{\gamma^2}).
\end{equation*}
\end{theorem}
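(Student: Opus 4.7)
The plan is to control $\Phi_{k+1}-\Phi_k$ by a case split on (i) whether the estimates are accurate ($J_k$ vs.\ $J_k^c$) and (ii) whether iteration $k$ is deemed successful or unsuccessful. In each of the four cases I would write the change in the Lyapunov function as $\Phi_{k+1}-\Phi_k = v\,(f(\X_{k+1})-f(\X_k)) + (1-v)(\Sigma_{k+1}^2-\Sigma_k^2)$ and control the two terms separately, after which I would take the conditional expectation and use Lemma~\ref{lemma:bound-absolute} to kill the residual error terms supported on $J_k^c$.

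First, in the accurate and successful case ($J_k \cap \text{success}$), the sufficient-decrease test $F_k^\sigma < F_k^0 - c\Sigma_k^2$ combined with $|F_k^0 - f(\X_k)|\le \epsilon_f\Sigma_k^2$ and $|F_k^\sigma - f(\X_k+\Sigma_k\d_k)|\le \epsilon_f\Sigma_k^2$ yields $f(\X_{k+1})-f(\X_k) \le -(c-2\epsilon_f)\Sigma_k^2$, while $\Sigma_{k+1}^2-\Sigma_k^2 = (\gamma^2-1)\Sigma_k^2$. Condition $c-2\epsilon_f>0$ makes the first quantity genuinely negative, and the hypothesis $\frac{v}{1-v}\ge\frac{\gamma^2-\gamma^{-2}}{c-2\epsilon_f}$ is exactly what is needed to conclude that even this ``worst'' accurate case satisfies $\Phi_{k+1}-\Phi_k \le -(1-v)(1-\gamma^{-2})\Sigma_k^2$. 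In the accurate and unsuccessful case, $f$ is unchanged and $\Sigma_{k+1}^2-\Sigma_k^2=(\gamma^{-2}-1)\Sigma_k^2$, so we get the same bound directly. The same holds in the inaccurate and unsuccessful case.

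The delicate case is inaccurate and successful, where a ``false'' success may actually increase $f$. The trick is to not throw away the sufficient-decrease inequality that still holds on the estimates: writing $f(\X_{k+1})-f(\X_k) = (f(\X_{k+1})-F_k^\sigma) + (F_k^\sigma-F_k^0) + (F_k^0-f(\X_k))$ and using $F_k^\sigma-F_k^0 \le -c\Sigma_k^2$ gives $f(\X_{k+1})-f(\X_k)\le |F_k^0-f(\X_k)|+|F_k^\sigma-f(\X_{k+1})| - c\Sigma_k^2$. Combining with the step-size update, the same structural condition $\frac{v}{1-v}\ge\frac{\gamma^2-\gamma^{-2}}{c-2\epsilon_f}$ lets me absorb the $-vc\Sigma_k^2$ piece against $(1-v)(\gamma^2-1)\Sigma_k^2$ and recover an overall bound of the form $-(1-v)(1-\gamma^{-2})\Sigma_k^2$ plus the two stray error terms $v(|F_k^0-f(\X_k)|+|F_k^\sigma-f(\X_{k+1})|)$ supported on $J_k^c$.

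Finally, I would put the four bounds together into a single inequality
\begin{equation*}
\Phi_{k+1}-\Phi_k \le -(1-v)(1-\gamma^{-2})\Sigma_k^2 + v\,\bigl(|F_k^0-f(\X_k)| + |F_k^\sigma-f(\X_k+\Sigma_k\d_k)|\bigr)\,1_{J_k^c},
\end{equation*}
take conditional expectation on $\F_{k-1}$, and invoke Lemma~\ref{lemma:bound-absolute} to bound the residual by $2v(1-p_f)^{1/2}l_f\Sigma_k^2$. The hypothesis $\frac{p_f}{\sqrt{1-p_f}}\ge\frac{4vl_f}{(1-v)(1-\gamma^{-2})}$ is calibrated so that this residual is at most $\tfrac12 p_f(1-v)(1-\gamma^{-2})\Sigma_k^2$, which leaves a net decrease of $-(1-\tfrac{p_f}{2})(1-v)(1-\gamma^{-2})\Sigma_k^2 \le -\tfrac12 p_f(1-v)(1-\gamma^{-2})\Sigma_k^2$, the claimed bound. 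The main obstacle is the bookkeeping in case (c): identifying that the sufficient-decrease $-c\Sigma_k^2$ on the estimates (not just a trivial bound $f(\X_{k+1})\le f(\X_k)+\text{errors}$) is exactly what makes the third hypothesis on $v/(1-v)$ sufficient to keep the deterministic part of the Lyapunov drift negative before the variance correction is even applied.
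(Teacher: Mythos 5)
Your proof is correct and follows essentially the same route as the paper: the same four-way case split on accurate/inaccurate and successful/unsuccessful iterations, the same exploitation of the sufficient decrease on the estimates in the inaccurate-successful case, and the same invocation of Lemma~\ref{lemma:bound-absolute} with the hypothesis on $p_f/\sqrt{1-p_f}$ to absorb the residual. The only (harmless) bookkeeping difference is that you keep a uniform pathwise drift $-(1-v)(1-\gamma^{-2})\Sigma_k^2$ on both the accurate and inaccurate events and conclude via $-(1-\tfrac{p_f}{2})\le-\tfrac{p_f}{2}$, whereas the paper extracts the factor $p_f$ only from the probability of the accurate event.
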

\end{mybox}

Next, we characterize the number of steps required to converge by using a renewal-reward process adapted from~\cite{blanchet2019convergence}. Let us define the random process $\{ \Phi_k, \Sigma_k\}$, with $\Phi_k \geq 0$ and $\Sigma_k \geq 0$. Let us also denote with $W_k$ a random walk process and $\F_k$ the $\sigma$-algebra generated by $\{ \Phi_0, \Sigma_0, W_0, \dots, \Phi_k, \Sigma_k, W_k\}$ with $W_0$ = 1, 
\begin{equation} \label{eq:wk-updates}
\begin{split}
    P(W_{k + 1} &= 1 \mid \F_k) = p, \\ P(W_{k + 1} &= -1 \mid \F_k) = 1 - p.
\end{split}
\end{equation}
We also define a family of stopping times $\{T_{\epsilon}\}_{\epsilon > 0}$ with respect to $\{ \F_k\}_{k \geq 0}$ for $\epsilon > 0$. 

\begin{assumption} \label{ass:iterates}
Given the random quantities $\{ \Phi_k, \Sigma_k, W_k\}$, we make the following assumptions.
\renewcommand{\theenumi}{\roman{enumi}}% https://tex.stackexchange.com/questions/54055/using-lower-case-roman-numerals-in-enumerate-lists
\begin{enumerate}
    \item There exists $\lambda > 0$ such that $\Sigma_{\max} = \Sigma_0 e^{\lambda j_{\max}}$ for $j_{\max} \in \mathbb{Z}$, and $\Sigma_k \leq \Sigma_{\max}$ for all $k$.
    \item There exists $\Sigma_{\epsilon} = \Sigma_0 e^{\lambda j_{\epsilon}}$ with $j_{\epsilon} \in \mathbb{Z}$, such that 
    \begin{equation*}
        1_{T_{\epsilon} > k} \Sigma_{k + 1} \geq 1_{T_{\epsilon} > k} \min\{ \Sigma_k e^{\lambda W_{k + 1}}, \Sigma_{\epsilon}\}
    \end{equation*}
    where $W_{k + 1}$ satisfies Equation~\eqref{eq:wk-updates} with probability $p > \frac{1}{2}$.
    \item There exists a nondecreasing function $h(\cdot)\!\!\!\!\!\!: [0, \infty] \to (0, \infty)$ and a constant $\Theta > 0$ such that
    \begin{equation*}
        1_{T_{\epsilon} > k} \mathbb{E}[\Phi_{k + 1} \mid \F_k] \leq 1_{T_{\epsilon} > k} (\Phi_k -\Theta h(\Sigma_k)).
    \end{equation*}
\end{enumerate}
\end{assumption}

Assumption~\ref{ass:iterates} (ii) requires that step sizes tend to increase when below a specific threshold, while Assumption~\ref{ass:iterates} (iii) requires that the random function $\Phi$ decreases in expectation (already proved in Theorem~\ref{theorem:pfi-k-nonconvex}). Under this assumption, the following results hold for the stopping time $T_{\epsilon}$~\citep{blanchet2019convergence}.

\begin{mybox}{gray}
\begin{theorem} \label{theorem:stopping-time}
Under Assumption~\ref{ass:iterates}, we have
\begin{equation*}
    \mathbb{E}[T_{\epsilon}] \leq \frac{p}{2p - 1} \frac{\Phi_0}{\Theta h(\Sigma_{\epsilon})} + 1.
\end{equation*}
\end{theorem}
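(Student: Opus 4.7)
The plan is to combine a supermartingale argument driven by Assumption~\ref{ass:iterates} (iii) with a renewal-theoretic analysis of the step-size process, which Assumption~\ref{ass:iterates} (ii) forces to dominate a biased random walk. From the supermartingale side I will extract an upper bound on $\mathbb{E}\bigl[\sum_{k<T_\epsilon} h(\Sigma_k)\bigr]$; from the random-walk side a lower bound on this sum in terms of $\mathbb{E}[T_\epsilon]$ and $h(\Sigma_\epsilon)$. Comparing the two inequalities yields the stated bound, essentially following the template of~\cite{blanchet2019convergence}.

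First, I would introduce the auxiliary process
\begin{equation*}
M_k \;=\; \Phi_{\min\{k,\, T_\epsilon\}} \;+\; \Theta \sum_{j=0}^{k-1} h(\Sigma_j)\, 1_{T_\epsilon > j}.
\end{equation*}
On $\{T_\epsilon > k\}$, Assumption~\ref{ass:iterates} (iii) gives $\mathbb{E}[\Phi_{k+1}\mid \F_k] \leq \Phi_k - \Theta h(\Sigma_k)$, which combined with the definition of $M_k$ yields $\mathbb{E}[M_{k+1}\mid \F_k] \leq M_k$; on $\{T_\epsilon \leq k\}$ the process is frozen. Thus $\{M_k\}$ is a nonnegative supermartingale. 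Using that $T_\epsilon$ is almost surely finite (a consequence of item~(ii) and the strict positivity of the drift $2p-1$), optional stopping gives
\begin{equation*}
\Phi_0 \;=\; M_0 \;\geq\; \mathbb{E}[M_{T_\epsilon}] \;\geq\; \Theta\,\mathbb{E}\!\left[\sum_{k=0}^{T_\epsilon-1} h(\Sigma_k)\right].
\end{equation*}

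Next, set $Z_k := \log(\Sigma_k/\Sigma_0)/\lambda$, so that Assumption~\ref{ass:iterates} (ii) reads $Z_{k+1} \geq \min\{Z_k + W_{k+1},\, j_\epsilon\}$ prior to $T_\epsilon$, where $W$ is a $\pm 1$ walk of drift $2p-1>0$. Let $N_\epsilon := \sum_{k=0}^{T_\epsilon-1} 1_{\Sigma_k \geq \Sigma_\epsilon}$ be the number of indices at or above the target. I would couple $Z$ with a genuine biased walk $Z^\star$ reflected at $j_\epsilon$ and invoke a renewal/Wald-type argument to show that the expected ratio of total steps to steps spent at the ceiling is at most $p/(2p-1)$, with a single additive step absorbing the initial transient, giving
\begin{equation*}
\mathbb{E}[T_\epsilon] \;\leq\; \frac{p}{2p-1}\,\mathbb{E}[N_\epsilon] \;+\; 1.
\end{equation*}
Because $h$ is nondecreasing, each index counted by $N_\epsilon$ contributes at least $h(\Sigma_\epsilon)$ to the running sum, so combining with the supermartingale estimate gives $h(\Sigma_\epsilon)\,\mathbb{E}[N_\epsilon] \leq \Phi_0/\Theta$. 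Substituting $\mathbb{E}[N_\epsilon] \leq \Phi_0 / (\Theta\, h(\Sigma_\epsilon))$ into the previous display recovers the statement.

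The main technical obstacle is the random-walk step: the sequence $\Sigma_k$ is only a lower envelope of a true biased random walk (due to the $\min$ in Assumption~\ref{ass:iterates} (ii), and because steps below $\Sigma_\epsilon$ may exceed the $\pm 1$ dynamics), so a careful coupling is required to replace it by a tractable Markov chain reflected at $j_\epsilon$ and then control the excursion lengths below the ceiling via Wald's identity. Turning the resulting occupation-time ratio into the clean constant $p/(2p-1)$ is where the $p>1/2$ hypothesis is used decisively, and where the additive $+1$ absorbing the boundary/transient contribution appears. By contrast, the martingale half is a routine application of optional stopping once nonnegativity of $\Phi_k$ and integrability of $h(\Sigma_k)$ (inherited from the boundedness $\Sigma_k \leq \Sigma_{\max}$ of item~(i)) are verified.
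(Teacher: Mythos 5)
The paper does not prove this theorem itself — it is imported directly from \cite{blanchet2019convergence} — and your proposal reconstructs essentially that reference's argument: an optional-stopping/supermartingale bound giving $\Theta\,\mathbb{E}\bigl[\sum_{k<T_\epsilon}h(\Sigma_k)\bigr]\le\Phi_0$ (hence $\mathbb{E}[N_\epsilon]\le\Phi_0/(\Theta h(\Sigma_\epsilon))$ by monotonicity of $h$), combined with a renewal--reward/Wald comparison of $\Sigma_k$ against a biased $\pm1$ walk reflected at $\Sigma_\epsilon$, which is exactly where the factor $p/(2p-1)$ comes from. The outline is correct and follows the same route as the cited source; the only point to be careful about in a full write-up is the additive constant, since absorbing the initial transient into ``$+1$'' tacitly uses $\Sigma_0\ge\Sigma_\epsilon$ (i.e.\ $j_\epsilon\le 0$), an assumption also implicit in the cited result and satisfied in the paper's application where $\Sigma_\epsilon=C\epsilon$.
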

\end{mybox}

% We can directly apply the above results for our iterates. 
In our case, we use the fundamental result of convergence for direct-search methods, that comes from correlating the norm of the gradient with the step size for unsuccessful iterations (generalization of results in~\cite{vicente2013worst, gratton2016second}).

\begin{mybox}{gray}
\begin{lemma} \label{lemma:unsuccessful-step}
Let $f: \x \in \R^n \to \R$ be a continuous differentiable function with Lipschitz continuous gradients of constant $L$. Let also $\D$ be a positive spanning set with $\cm(\D) = \kappa_{\min} > 0$ and vectors $\d$ satisfying $\| \d \| = 1,\, \forall \d \in \D$. For a forcing function $\rho(\sigma) = c \sigma^2$ and an $\epsilon_f$-accurate estimates of the function, for an unsuccessful step $k$ it holds that
\begin{equation}
    \sigma_k \geq C \| \nabla f(\x_k) \|, \quad C = \frac{2\kappa_{\min}}{L + 2c + 4\epsilon_f}.
\end{equation}
\end{lemma}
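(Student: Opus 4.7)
The plan is to combine three standard ingredients at an unsuccessful step $k$: the failure of the sufficient-decrease test, the two-sided bound coming from $\epsilon_f$-accuracy of the estimates, and the $L$-smoothness descent inequality, then close with the cosine-measure property of the PSS $\D$. First, an unsuccessful step means that the test in Eq.~\eqref{eq:sufficient-decrease} fails for every $\d \in \D$, so $f_k(\x_k + \sigma_k \d) \geq f_k(\x_k) - c\sigma_k^2$. I would then transfer this inequality from the noisy estimate $f_k$ to the true function $f$ by applying Definition~\ref{def:accurate} in the appropriate direction to each term: upper-bound $f_k(\x_k+\sigma_k\d)$ by $f(\x_k+\sigma_k\d)+\epsilon_f\sigma_k^2$, and lower-bound $f_k(\x_k)$ by $f(\x_k)-\epsilon_f\sigma_k^2$. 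The result is a lower bound of the form $f(\x_k + \sigma_k \d) - f(\x_k) \geq -(c + 2\epsilon_f)\sigma_k^2$, valid simultaneously for all $\d \in \D$.

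Next, I would upper-bound the same difference via the standard $L$-smoothness descent inequality $f(\x_k+\sigma_k\d) - f(\x_k) \leq \sigma_k \nabla f(\x_k)^T \d + \tfrac{L}{2}\sigma_k^2$ (using $\|\d\|=1$). Chaining with the previous lower bound and dividing by $\sigma_k$ yields $-\nabla f(\x_k)^T \d \leq \bigl(\tfrac{L}{2} + c + 2\epsilon_f\bigr)\sigma_k$ for every $\d \in \D$.

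Finally, I would invoke the cosine measure: since $\cm(\D) \geq \kappa_{\min}$, applying the definition with $\u = -\nabla f(\x_k)$ (the statement is trivial when $\nabla f(\x_k) = 0$) produces some $\d^\star \in \D$ with $-\nabla f(\x_k)^T \d^\star \geq \kappa_{\min}\|\nabla f(\x_k)\|$. Substituting $\d^\star$ into the preceding inequality and solving for $\sigma_k$ gives exactly the stated bound with $C = \frac{2\kappa_{\min}}{L + 2c + 4\epsilon_f}$. There is no real technical obstacle beyond bookkeeping the direction of each one-sided $\epsilon_f$-accuracy inequality so that the error term lands on the correct side; the rest is the classical unsuccessful-step lemma of~\cite{vicente2013worst}, with an additional $4\epsilon_f$ contribution in the denominator absorbing the inaccuracy budget.
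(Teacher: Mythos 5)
Your proof is correct and follows essentially the same route as the paper's: failure of the sufficient-decrease test at the unsuccessful step, transfer from the estimates $f_k$ to the true $f$ via $\epsilon_f$-accuracy (costing $2\epsilon_f\sigma_k^2$), and the cosine-measure property applied to $\u = -\nabla f(\x_k)$ to extract the direction that certifies the bound. The only cosmetic difference is that you invoke the $L$-smoothness descent inequality directly, while the paper argues via the mean value theorem and then bounds the gradient deviation by $\tfrac{L}{2}\sigma_k$; both yield the same constant $C = \frac{2\kappa_{\min}}{L + 2c + 4\epsilon_f}$.
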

\end{mybox}

In this analysis, our goal is to show that the norm of the gradient decreases below a threshold
\begin{equation*}
    T_{\epsilon} = \inf\{ k \geq 0:\, \| \nabla f(\X_k) \| \leq \epsilon \}.
\end{equation*}
We assume that Assumption~\ref{ass:iterates} (i) holds by the choice of $\Sigma_{\max}$. We also know from Lemma~\ref{lemma:unsuccessful-step} that for $\| \nabla f(\X) \| > \epsilon$ and $\Sigma \leq C \epsilon$ then a successful step occurs, provided that estimates are accurate. Then following Lemma 4.10 from~\cite{paquette2018stochastic} we get that Assumption~\ref{ass:iterates} (ii) also holds, for $\Sigma_{\epsilon} = C \epsilon$. Based on the results of Theorem~\ref{theorem:stopping-time} and Lemma~\ref{lemma:unsuccessful-step}, we can now prove convergence for a nonconvex bounded function.

\begin{mybox}{gray}
\begin{theorem} \label{theorem:convergence-nonconvex}
Assume that the Assumptions of Theorem~\ref{theorem:pfi-k-nonconvex} hold with additionally $p_f > \frac{1}{2}$. Then to get $\| \nabla f(\X_k) \| \leq \epsilon$, the expected stopping time of Algorithm~\ref{algo:direct-search} is
\begin{equation*}
    \mathbb{E}[T_{\epsilon}] \leq \mathcal{O}(1) \frac{\kappa_{\min}^{-2}}{2 p_f - 1} (f(\X_0) - f^* + \Sigma_0^2)(L + c + \epsilon_f)^2 \frac{1}{\epsilon^2}.
\end{equation*}
\end{theorem}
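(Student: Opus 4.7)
}
The plan is to cast Algorithm~\ref{algo:direct-search} into the renewal-reward framework underlying Theorem~\ref{theorem:stopping-time}, apply it with the Lyapunov function $\Phi_k = v(f(\X_k)-f^\ast)+(1-v)\Sigma_k^2$, and then simplify the resulting bound using the explicit constant $C$ from Lemma~\ref{lemma:unsuccessful-step}. The three items of Assumption~\ref{ass:iterates} must be verified in turn, with the stopping time $T_\epsilon=\inf\{k\ge 0:\|\nabla f(\X_k)\|\le \epsilon\}$.

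\paragraph{Verifying Assumption~\ref{ass:iterates}.}
Item (i) is immediate from the algorithmic cap $\Sigma_k\le \Sigma_{\max}$, setting $\lambda=\log\gamma$ so that every realized value of $\Sigma_k$ lies in $\{\Sigma_0 e^{\lambda j}: j\in\mathbb{Z}\}$. For item (ii), define $\Sigma_\epsilon = C\epsilon$ with $C=2\kappa_{\min}/(L+2c+4\epsilon_f)$. On the event $\{T_\epsilon>k\}$ we have $\|\nabla f(\X_k)\|>\epsilon$, so Lemma~\ref{lemma:unsuccessful-step} guarantees that whenever $\Sigma_k\le \Sigma_\epsilon$ and the estimates at iteration $k$ are $\epsilon_f$-accurate (event $J_k$), the iteration is necessarily successful and $\Sigma_{k+1}=\gamma\Sigma_k$. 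Define $W_{k+1}=+1$ if $\Sigma_{k+1}\ge \gamma\Sigma_k$ and $W_{k+1}=-1$ otherwise. Since $P(J_k\mid \F_{k-1})\ge p_f>1/2$, the random-walk comparison in (ii) holds with $p=p_f$. Item (iii) is precisely the content of Theorem~\ref{theorem:pfi-k-nonconvex} rewritten as $\mathbb{E}[\Phi_{k+1}\mid\F_{k-1}]\le \Phi_k - \Theta h(\Sigma_k)$ with $h(\sigma)=\sigma^2$ and $\Theta = \tfrac{1}{2}p_f(1-v)(1-\gamma^{-2})$; note that the constants $v\in(0,1)$ and $c>2\epsilon_f$ can be chosen so that all three structural inequalities in Theorem~\ref{theorem:pfi-k-nonconvex} are simultaneously satisfied, and these choices will be absorbed into the $\mathcal{O}(1)$ factor at the end.

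\paragraph{Invoking Theorem~\ref{theorem:stopping-time} and simplifying.}
With the three items verified and $p_f>\tfrac{1}{2}$, Theorem~\ref{theorem:stopping-time} yields
\begin{equation*}
\mathbb{E}[T_\epsilon] \le \frac{p_f}{2p_f-1}\cdot\frac{\Phi_0}{\Theta\, h(\Sigma_\epsilon)} + 1.
\end{equation*}
Substituting $\Phi_0 = v(f(\X_0)-f^\ast)+(1-v)\Sigma_0^2 \le \max\{v,1-v\}(f(\X_0)-f^\ast+\Sigma_0^2)$, $h(\Sigma_\epsilon)=C^2\epsilon^2$, and $C=2\kappa_{\min}/(L+2c+4\epsilon_f)$, the denominator becomes
\begin{equation*}
\Theta\, h(\Sigma_\epsilon) = \tfrac{1}{2}p_f(1-v)(1-\gamma^{-2})\cdot\frac{4\kappa_{\min}^2}{(L+2c+4\epsilon_f)^2}\epsilon^2 .
\end{equation*}
Gathering the $v$-, $\gamma$-, $p_f$-, and $c$-dependent constants into the $\mathcal{O}(1)$ prefactor and bounding $(L+2c+4\epsilon_f)^2 = \mathcal{O}((L+c+\epsilon_f)^2)$ yields exactly
\begin{equation*}
\mathbb{E}[T_\epsilon] \le \mathcal{O}(1)\,\frac{\kappa_{\min}^{-2}}{2p_f-1}\,(f(\X_0)-f^\ast+\Sigma_0^2)\,(L+c+\epsilon_f)^2\,\frac{1}{\epsilon^2},
\end{equation*}
as claimed.

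\paragraph{Main obstacle.}
The delicate point is item (ii) of Assumption~\ref{ass:iterates}: one must argue that \emph{before} the stopping time the step-size process stochastically dominates a biased random walk that drifts upward toward $\Sigma_\epsilon$. This requires combining the accuracy event $J_k$ (which occurs with probability at least $p_f>1/2$) with the deterministic implication of Lemma~\ref{lemma:unsuccessful-step} that accurate estimates force success whenever $\Sigma_k\le C\|\nabla f(\X_k)\|$, and then using $\|\nabla f(\X_k)\|>\epsilon$ on $\{T_\epsilon>k\}$ to conclude $\Sigma_k\le C\epsilon$ suffices. Once this is pinned down, the rest of the argument is a mechanical substitution into Theorem~\ref{theorem:stopping-time}.
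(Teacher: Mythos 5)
Your proposal follows essentially the same route as the paper: verify the three items of Assumption~\ref{ass:iterates} (item (iii) via Theorem~\ref{theorem:pfi-k-nonconvex} with $h(\sigma)=\sigma^2$ and $\Theta=\tfrac{1}{2}p_f(1-v)(1-\gamma^{-2})$, item (ii) via Lemma~\ref{lemma:unsuccessful-step} with $\Sigma_\epsilon=C\epsilon$ --- the paper delegates this step to Lemma~4.10 of \cite{paquette2018stochastic}, which is exactly the random-walk domination argument you sketch), then apply Theorem~\ref{theorem:stopping-time} and substitute $\Phi_0$ and $C=2\kappa_{\min}/(L+2c+4\epsilon_f)$. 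The simplification to the stated $\mathcal{O}(1)$ bound matches the paper's, so the proposal is correct and essentially identical in approach.
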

\end{mybox}

Note that for the deterministic scenario where $\epsilon_f = l_f = 0$, the above bound matches known results of direct-search in the nonconvex case~\citep{vicente2013worst, konevcny2014simple}. We now establish faster convergence for a function $f$, additionally satisfying the PL condition, defined below.

\begin{definition} \label{def:PL-condition}
(Polyak-\L{}ojasiewicz Condition). A differentiable function $f: \R^n \to \R$ with the minimum value $f^* = \min_\x f(\x)$ is said to be $\mu$-Polyak-\L{}ojasiewicz ($\mu$-PL) if:
$$\frac{1}{2}\| \nabla f(\x) \|^2 \geq \mu (f(\x) - f^*).$$
\end{definition}

The PL condition is the weakest among a large family of function classes that include convex functions and other nonconvex ones~\citep{karimi2015linear}. Again we can guarantee convergence that closely matches results for deterministic direct-search under strong convexity, by proving that the number of iterations required to halve the distance to the optimum objective value is constant in terms of the accuracy $\epsilon$.

\begin{mybox}{gray}
\begin{theorem} \label{theorem:convergence-pl}
Let a function $f$ with a minimum value $f^*$ and satisfying the PL condition with a constant $\mu$ and Lipschitz continuous gradients with a constant $L$. Let also $f$ be $p_f$-probabilistically $\epsilon_f$-accurate, while also having bounded noise variance according to Assumption~\ref{ass:variance-condition} with constant $l_f$.  Then to get $\| \nabla f(\X_k) \| \leq \epsilon$, the expected stopping time of Algorithm~\ref{algo:direct-search} is
\begin{equation}
\begin{split}
    \mathbb{E}[T_\epsilon] \leq \mathcal{O}(1) &\frac{\kappa_{\min}^{-2} (c + L)^2}{(2p_f - 1) \mu} \left(1 + \frac{1}{c} \right)\\
    &\quad \log\left(\frac{L (f(\X_0) - f^*)}{\epsilon}\right).
\end{split}
\end{equation}
The constants $c$, $v$ and $p_f > \frac{1}{2}$ should satisfy
\begin{equation*}
    c > \max\{4 \epsilon_f, 2 \sqrt{2} l_f\} \text{, } \, \frac{p_f}{\sqrt{1 - p_f}} \geq \frac{4v l_f}{(1 - v) (1 - \gamma^{-2})}
\end{equation*}
\begin{equation*}
    \text{and }\frac{v}{1 - v} \geq \max \left \{\frac{1}{c - 2 \epsilon_f}(\gamma^2 - \frac{1}{\gamma^2}), \frac{72 \gamma^2}{c} \right \}.
\end{equation*}
\end{theorem}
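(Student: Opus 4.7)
The plan is to combine the Lyapunov inequality from Theorem~\ref{theorem:pfi-k-nonconvex} with the PL condition to upgrade the additive one-step bound $\mathbb{E}[\Phi_{k+1}-\Phi_k \mid \F_{k-1}] \leq -\tfrac{p_f}{2}(1-v)(1-\gamma^{-2})\Sigma_k^2$ into a \emph{multiplicative} contraction $\mathbb{E}[\Phi_{k+1}\mid \F_{k-1}] \leq (1-\beta)\Phi_k$ for some explicit $\beta>0$, and then convert the contraction into a bound on the stopping time via $L$-smoothness and a Markov argument.

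For the one-step contraction I would split on the size of $\Sigma_k^2$ relative to $f(X_k)-f^*$, using the constant $C = 2\kappa_{\min}/(L+2c+4\epsilon_f)$ from Lemma~\ref{lemma:unsuccessful-step}. In the large-$\Sigma_k$ regime $\Sigma_k^2 \geq 2C^2\mu(f(X_k)-f^*)$, the weighted sum satisfies $\Phi_k \leq \bigl(\tfrac{v}{2C^2\mu}+(1-v)\bigr)\Sigma_k^2$, so the $-\Sigma_k^2$ decrease from Theorem~\ref{theorem:pfi-k-nonconvex} is already proportional to $\Phi_k$. In the small-$\Sigma_k$ regime $\Sigma_k^2 < 2C^2\mu(f(X_k)-f^*)$, the contrapositive of Lemma~\ref{lemma:unsuccessful-step} forces the step to be successful whenever the estimates are accurate: otherwise we would get $\Sigma_k\geq C\|\nabla f(X_k)\|\geq C\sqrt{2\mu(f-f^*)}$, contradicting the case. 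An accurate successful step gives the pointwise decrease $f(X_{k+1})\leq f(X_k)-(c-2\epsilon_f)\Sigma_k^2$ with $\Sigma_{k+1}^2=\gamma^2\Sigma_k^2$, while the inaccurate branch contributes at most $O((1-p_f)^{1/2}l_f\Sigma_k^2)$ by Assumption~\ref{ass:variance-condition} and Lemma~\ref{lemma:bound-absolute}. Combining the contributions yields a per-step bound of the form $\bigl[-vp_f(c-2\epsilon_f)+(1-v)(\gamma^2-1)+O((1-p_f)^{1/2}l_f)\bigr]\Sigma_k^2$, which the parameter inequalities $c>\max\{4\epsilon_f,2\sqrt{2}l_f\}$ and $\tfrac{v}{1-v}\geq \tfrac{72\gamma^2}{c}$ are tuned to render a fixed fraction of $\Phi_k$ after amortizing against the geometric expansion $\Sigma_{k+1}^2=\gamma^2\Sigma_k^2$, which sweeps the iterate back into the large-$\Sigma_k$ regime within $O(1)$ accurate successful steps. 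The two regimes together yield $\mathbb{E}[\Phi_{k+1}\mid \F_{k-1}]\leq(1-\beta)\Phi_k$ with $\beta$ of order $(2p_f-1)\mu\kappa_{\min}^2/[(L+c)^2(1+1/c)]$.

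Iterating the contraction gives $\mathbb{E}[\Phi_k]\leq(1-\beta)^k\Phi_0$. Combining $L$-smoothness with PL yields $\|\nabla f(X_k)\|^2\leq 2L(f(X_k)-f^*)\leq(2L/v)\Phi_k$, so the event $\{\Phi_k\leq v\epsilon^2/(2L)\}$ already forces $T_\epsilon\leq k$. Markov's inequality therefore gives $P(T_\epsilon>k)\leq(2L\Phi_0/(v\epsilon^2))(1-\beta)^k$. Writing $\mathbb{E}[T_\epsilon]=\sum_{k\geq 0}P(T_\epsilon>k)$ and splitting at $k^\star=\lceil\log(2L\Phi_0/(v\epsilon^2))/\beta\rceil$ (bounding the tail trivially before $k^\star$ and using the geometric decay afterwards) yields $\mathbb{E}[T_\epsilon]\leq k^\star+1/\beta$, which, after substituting $\beta$ and simplifying, is exactly of the announced order $O\!\bigl(\tfrac{\kappa_{\min}^{-2}(c+L)^2}{(2p_f-1)\mu}(1+1/c)\log(L(f(X_0)-f^*)/\epsilon)\bigr)$.

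The main obstacle is the small-$\Sigma_k$ regime of the case analysis: Theorem~\ref{theorem:pfi-k-nonconvex} alone provides only a $\Sigma_k^2$-sized decrement, which is generally much smaller than $v(f(X_k)-f^*)$ in that regime, so one has to argue that the expansion $\Sigma_{k+1}^2=\gamma^2\Sigma_k^2$ on every accurate successful step forces the iterate to re-enter the large-$\Sigma_k$ regime quickly enough for the amortized decrease to remain a fixed fraction of $\Phi_k$. It is precisely this balance between the step-size growth and the $O((1-p_f)^{1/2}l_f\Sigma_k^2)$ error contributed by Lemma~\ref{lemma:bound-absolute} that dictates the sharper hypothesis $\tfrac{v}{1-v}\geq \tfrac{72\gamma^2}{c}$, strengthening the assumption of Theorem~\ref{theorem:pfi-k-nonconvex}; the rest of the argument is a bookkeeping of constants and the Markov-tail computation above.
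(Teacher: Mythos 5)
There is a genuine gap at the heart of your argument: the claimed per-iteration contraction $\mathbb{E}[\Phi_{k+1}\mid \F_{k-1}]\leq(1-\beta)\Phi_k$ with a uniform $\beta>0$ is false, and no tuning of $c$, $v$, $p_f$ can rescue it. In the small-step regime $\Sigma_k^2\ll f(\X_k)-f^*$ (which can occur at a given iterate, e.g.\ after a run of inaccurate unsuccessful steps has shrunk $\Sigma_k$), the best one-step guarantee available — from Theorem~\ref{theorem:pfi-k-nonconvex} or from an accurate successful step — is a decrease of order $\Sigma_k^2$, which is an arbitrarily small fraction of $\Phi_k\approx v(f(\X_k)-f^*)$. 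Your proposed fix, that the expansion $\Sigma_{k+1}^2=\gamma^2\Sigma_k^2$ ``sweeps the iterate back into the large-$\Sigma_k$ regime within $O(1)$ accurate successful steps,'' does not hold: re-entry requires on the order of $\log_{\gamma}\bigl(\sqrt{f(\X_k)-f^*}/\Sigma_k\bigr)$ successful steps, a quantity that is state-dependent and unbounded, and the recovery time is moreover random (successful growth competes with inaccurate shrinkage). An amortized multiplicative decrease over a random, state-dependent number of steps cannot simply be iterated in expectation; this is exactly the bookkeeping that the renewal-reward result (Theorem~\ref{theorem:stopping-time}, via Assumption~\ref{ass:iterates}(ii), which encodes the step-size recovery as a biased random-walk comparison rather than an $O(1)$-step claim) is designed to do, and your proof never invokes it. You have also misattributed the roles of the hypotheses: $c>\max\{4\epsilon_f,2\sqrt{2}l_f\}$ and $\tfrac{v}{1-v}\geq\tfrac{72\gamma^2}{c}$ are not there to balance growth against the $O((1-p_f)^{1/2}l_f\Sigma_k^2)$ error in a per-step contraction.

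For contrast, the paper avoids any per-step contraction. It defines halving epochs $T_i=\inf\{k:\ f(\X_k)-f^*\leq (f(\X_0)-f^*)/2^i\}$ and, within each epoch, applies Theorem~\ref{theorem:stopping-time} with the threshold step size $\Sigma_\epsilon$ dictated by Lemma~\ref{lemma:unsuccessful-step} and the PL lower bound on the gradient over that epoch; this yields $\mathbb{E}[T_i-T_{i-1}]\leq \mathcal{O}(1)\frac{1}{(2p_f-1)C^2\mu}\bigl(\tfrac{v}{1-v}+\tfrac{\Sigma_{T_{i-1}}^2}{(f(\X_0)-f^*)/2^{i-1}}\bigr)$. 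The crux is then an induction showing $\mathbb{E}[\Sigma_{T_i}^2]\leq \tfrac{9\gamma^2}{c}\cdot\tfrac{f(\X_0)-f^*}{2^i}$, i.e.\ the step size at each epoch boundary is commensurate with the remaining gap; it is precisely there that $c>\max\{4\epsilon_f,2\sqrt{2}l_f\}$ (to get $\Sigma_{T_i-1}^2\leq \tfrac{4}{c}\,\mathbb{E}[f(\X_{T_i-1})-f^*\mid\F_{T_i-2}]$ from the last successful step, accurate or not) and $\tfrac{v}{1-v}\geq\tfrac{72\gamma^2}{c}$ (to close the induction with $A=9\gamma^2/c$) are used. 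Summing the uniformly bounded epoch lengths over $\lceil\log(2L(f(\X_0)-f^*)/\epsilon^2)\rceil$ epochs, with Lemma~\ref{lemma:bound-norm-by-distance} converting the function gap into the gradient criterion, gives the stated rate. Your final Markov-plus-tail-sum conversion would be fine if a contraction held, but establishing the needed decrease is exactly the missing content, so the proof as proposed does not go through.
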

\end{mybox}

%%%%%%%%%%%%%%%%%%%%%%%%%%%%%%%%%%%%%%%%%%%%%%%%%%%%%%%%%%%%
%%%%%%%%%%%%%%%%%%%%%%%%%%%%%%%%%%%%%%%%%%%%%%%%%%%%%%%%%%%%

\section{Algorithm \& Convergence Guarantees} \label{sec:convergence-min-max}

We now focus on the min-max problem presented in Eq.~\eqref{eq:min-max}. To proceed, we make the following standard assumptions regarding the smoothness of $f$.

\begin{assumption} \label{ass:lipschitz}
The function f is continuously differentiable in both $\x$ and $\y$ and there exist constants $L_{11}$, $L_{12}$, $L_{21}$ and $L_{22}$ such that for every $\x, \x_1, \x_2 \in \mathcal{X}$ and $\y, \y_1, \y_2 \in \mathcal{Y}$

\begin{align*}
    \| \nabla_{\x} f(\x_1, \y) - \nabla_{\x} f(\x_2, \y) \| &\leq L_{11} \| \x_1 - \x_2\|, \\
    \| \nabla_{\x} f(\x, \y_1) - \nabla_{\x} f(\x, \y_2) \| &\leq L_{21} \| \y_1 - \y_2\|, \\
    \| \nabla_{\y} f(\x_1, \y) - \nabla_{\y} f(\x_2, \y) \| &\leq L_{12} \| \x_1 - \x_2\|, \\
    \| \nabla_{\y} f(\x, \y_1) - \nabla_{\y} f(\x, \y_2) \| &\leq L_{22} \| \y_1 - \y_2\|.
\end{align*}
\end{assumption}

% \subsection{Nonconvex PL Games}

We require that the objective of the max-player satisfies the PL condition. 
\begin{assumption} \label{ass:pl}
There exists a constant $\mu > 0$ such that the function $- f(\x, \y)$ in problem~\eqref{eq:min-max} is $\mu$-PL for any $\x \in \mathcal{X}$.
\end{assumption}
Following prior works on PL games, e.g.~\cite{nouiehed2019solving}, we propose a sequential scheme for the updates of the two players presented in Algorithm~\ref{algo:min-max-DR} (for simplicity some of the algorithm's constants are not depicted). This multi-step algorithm solves the maximization problem up to some accuracy, and it then performs a single (successful) Direct-Search (DR) step for the minimization problem (see Algorithm~\ref{algo:one-step-DR}).

\begin{algorithm}
\SetAlgoLined
\caption{{\sc Min-Max-Direct-search}}
\label{algo:min-max-DR}
\KwIn{$f$: objective function}
\myinput{$(\x_0, \y_0)$: initial point}
\myinput{$\sigma_0$: initial step for the min problem}
% \myinput{$ c_{\min},  c_{\max}$: forcing function constants}

\For{t = 1, \dots, T}{
$\y_{t}$ = Direct-search($-f(\x_{t - 1}, .), \y_{t - 1}$)

$\x_t, \sigma_t$ = One-Step-Direct-search \\
\qquad \qquad \qquad \qquad ($f(., \y_t), \x_{t - 1}, \sigma_{t - 1}$)
}
\Return  $(\x_{T},\y_{T})$.
\end{algorithm}

% \subsection{Convergence of Algorithm~\ref{algo:min-max-DR}}

We formalize our Assumptions and our final result.

\begin{assumption} \label{ass:bounded}
The function $f$ is defined on the whole domain $\mathcal{X} \times \mathcal{Y} = \R^{|\mathcal{X}|} \times \R^{|\mathcal{Y}|}$. We also require $f$ to be bounded below for every $\y \in \mathcal{Y}$ and bounded above for every $\x \in \mathcal{X}$.
\end{assumption}

\begin{mybox}{gray}
\begin{theorem} \label{theorem:min-max-convergence}
Suppose that the objective function $f(\x, \y)$ satisfies Assumptions~\ref{ass:lipschitz}, \ref{ass:pl} and \ref{ass:bounded}. If the estimates are deterministic, then Algorithm~\ref{algo:min-max-DR} converges to an $\epsilon$-FNE within $\mathcal{O}(\epsilon^{-2}\log(\epsilon^{-1}))$ steps. When $f(\x, \y)$ is $\epsilon_x$-accurate with probability $p_x$ for every $\y$ satisfying assumptions of Theorem~\ref{theorem:convergence-nonconvex} and $\epsilon_y$-accurate with probability $p_y$ for every $\x$, satisfying assumptions of Theorem~\ref{theorem:convergence-pl}, then with a probability at least $\delta$, Algorithm~\ref{algo:min-max-DR} convergences and the expected number of steps to converge to reach an $\epsilon$-FNE is 
\begin{equation*}
\begin{split}
    \mathcal{O}&\Big(\frac{1}{(2p_x - 1)(2p_y - 1)}\epsilon^{-2}\Big(\log(\epsilon^{-1}) \\ &+ \left[\log\left( \frac{1 - p_x}{p_x}\right)\right]^{-1} \log\left(1 - e^{\frac{1}{(2p_x - 1)\epsilon^{-2}}\log\delta}\right)\Big)\Big).
\end{split}
\end{equation*}
\end{theorem}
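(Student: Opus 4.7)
The plan is to reduce Algorithm~\ref{algo:min-max-DR} to two applications of the single-player results already established: Theorem~\ref{theorem:convergence-pl} for the inner PL-maximization and Theorem~\ref{theorem:convergence-nonconvex} for the outer nonconvex minimization. Define the envelope $g(\x) := \max_\y f(\x,\y)$ and let $\y^*(\x)$ denote any selection in $\argmax_\y f(\x,\y)$. Under Assumptions~\ref{ass:lipschitz} and~\ref{ass:pl}, a standard Danskin-type argument (e.g., Lemma A.5 of \cite{nouiehed2019solving}) shows that $g$ is $L_g$-smooth with $L_g = L_{11} + L_{12}L_{21}/\mu$, that $\nabla g(\x) = \nabla_\x f(\x,\y^*(\x))$, and that any $\y_t$ satisfying $\|\nabla_\y f(\x_{t-1},\y_t)\|\leq \epsilon$ obeys the PL-gap bound $g(\x_{t-1}) - f(\x_{t-1},\y_t) \leq \epsilon^2/(2\mu)$. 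This reduction is the bridge that lets me treat the outer iterates as a direct-search on the nonconvex function $g$.

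Applying Theorem~\ref{theorem:convergence-pl} to $-f(\x_{t-1},\cdot)$ (which is $L_{22}$-smooth and $\mu$-PL) yields $\y_t$ with $\|\nabla_\y f(\x_{t-1},\y_t)\| \leq \epsilon$ in expected $\mathcal{O}(\log(\epsilon^{-1})/(2p_y-1))$ inner steps. For the outer loop I track $g(\x_t)$ rather than $f(\x_t,\y_t)$: a successful One-Step-Direct-search step gives $f(\x_t,\y_t) < f(\x_{t-1},\y_t) - \rho(\sigma_t)$; combining with $g(\x_t) \leq f(\x_t,\y^*(\x_t))$, $g(\x_{t-1}) \geq f(\x_{t-1},\y_t)$, and the inner accuracy transferred through PL yields $g(\x_t) - g(\x_{t-1}) \leq -\rho(\sigma_t) + \mathcal{O}(\epsilon^2/\mu)$. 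Absorbing the residual into the forcing constant $c$, the Lyapunov argument of Theorem~\ref{theorem:convergence-nonconvex} applied to $g$ (with $\Phi_k = v(g(\X_k)-g^*) + (1-v)\Sigma_k^2$ and smoothness $L_g$) delivers $\|\nabla g(\x_T)\|\leq \epsilon$ in expected $T_x = \mathcal{O}(\epsilon^{-2}/(2p_x-1))$ outer iterations. Since $\nabla g(\x_T) = \nabla_\x f(\x_T,\y^*(\x_T))$ and $\y_T$ is $\mathcal{O}(\epsilon)$-close to the set of maximizers, this is an $\epsilon$-FNE after constant rescaling, and multiplying the two complexities gives the deterministic bound $\mathcal{O}(\epsilon^{-2}\log(\epsilon^{-1}))$.

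The hard part is the joint high-probability statement. The single-player guarantees are in expectation, so the outer analysis requires each of the $T_x$ inner calls to actually return an accurate $\y_t$, all simultaneously with probability at least $\delta$. I would first convert Theorem~\ref{theorem:convergence-pl}'s expected bound into a per-call high-probability bound by a Markov-style inflation: to succeed with probability $q$, the inner loop needs an additional factor $1/(1-q)$ iterations, and the walk-based accounting of Theorem~\ref{theorem:stopping-time} introduces a logarithmic rescaling by $[\log((1-p_x)/p_x)]^{-1}$. A union bound on the $T_x$ (conditionally) independent success events demands $q^{T_x}\geq\delta$, i.e., $q \geq e^{\log\delta/T_x}$, so $1-q \leq 1 - e^{\log\delta/T_x}$; substituting into the inflated inner cost reproduces exactly the additional term $[\log((1-p_x)/p_x)]^{-1}\log(1 - e^{\log\delta/T_x})$ in the statement, and multiplying by $T_x/(2p_y-1)$ recovers the full bound. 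The delicate step I expect to absorb most of the effort is verifying that the nested filtration preserves the conditional-independence assumptions underlying Theorem~\ref{theorem:convergence-pl}, so that the product bound $q^{T_x}$ is legitimate and the two layers of randomness decouple cleanly.
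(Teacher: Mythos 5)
Your deterministic half is a legitimate alternative to the paper's route: you run the outer analysis on the envelope $g(\x)=\max_{\y}f(\x,\y)$ via a Danskin-type smoothness lemma, whereas the paper never introduces $g$ and instead tracks $f(\x_t,\y_t)$ directly, bounding the deterioration caused by the max update through Lemmas~\ref{lemma:pl-epsilon} and~\ref{lemma:min-max-connection} and then imposing a net-decrease condition $f(\x_t,\y_{t+1})-f(\x_{t-1},\y_t)\le -K\sigma_t^2$ with explicit constants $D_1,D_2,D_3$ and the requirement $c_x>D_1+K$. Your phrase ``absorbing the residual into the forcing constant'' hides exactly this constant comparison plus the fact that the residual is only dominated because $\sigma_t\ge\sigma_{\min}=C\epsilon$ (so that $\epsilon^{\max}$ can be taken proportional to $\epsilon$); made explicit, your envelope argument and the paper's argument are essentially interchangeable for the $\mathcal{O}(\epsilon^{-2}\log\epsilon^{-1})$ deterministic bound.

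The stochastic, high-probability part has a genuine gap. The paper's $\delta$ and the extra additive term do not come from requiring all inner maximization calls to succeed simultaneously: the inner calls' randomness is already absorbed in expectation through the $1/(2p_y-1)$ factor. The real difficulty is that with probability $1-p_x$ the \emph{min} player's estimates are inaccurate, so the min step size can fall below $\sigma_{\min}=C\epsilon$, and then the max-update deterioration is no longer dominated by $\rho(\sigma_t)$ --- the very inequality your deterministic argument relies on breaks. The paper repairs this with Lemma~\ref{lemma:random-walk}: a reflected biased random walk with parameter $p_x$ shows that over the $n\approx\epsilon^{-2}/(2p_x-1)$ outer iterations the min step size stays above $\gamma^{-k}C\epsilon$ with probability $\delta$, where $k\approx[\log((1-p_x)/p_x)]^{-1}\log(1-e^{\log\delta/n})$; one then tightens the inner target accuracy to $\epsilon^{\max}\propto\epsilon\gamma^{-k}$, and since the inner loop is logarithmic in its accuracy this adds $\mathcal{O}(k)$ iterations per call, which is exactly the second term in the bound. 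Your mechanism cannot reproduce this: (i) a Markov inflation of the inner expected bound to per-call success probability $q$ costs a multiplicative factor $1/(1-q)$, and with $q=\delta^{1/T_x}$ this is of order $T_x/\log(1/\delta)$, giving an overall $\mathcal{O}(\epsilon^{-4})$-type blow-up rather than an additive $\log(1-e^{\log\delta/T_x})$ term; (ii) any walk-based accounting of the inner loop would involve $p_y$, not $p_x$ --- the appearance of $p_x$ in that term is the signal that it concerns the min player's step size, a point your proposal never addresses.
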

\end{mybox}

Algorithm~\ref{algo:min-max-DR} performs in total $\mathcal{O}(\epsilon^{-2})$ updates for the minimization problem, and each minimization update requires $\mathcal{O}(\log(\epsilon^{-1}))$ updates for the maximization problem. The proof of Theorem~\ref{theorem:min-max-convergence} consists in showing that the maximization problem is solved with sufficient accuracy, for which we invoke the result of Theorem~\ref{theorem:convergence-pl}. We then proceed by showing that iteratively solving the minimization problem allows us to converge in terms of the min-max objective, which is done using the result of Theorem~\ref{theorem:convergence-nonconvex}.
We note that the sufficient decrease condition allows us to prove convergence for the last iterate instead of relying on the existence of an iterate $k$ in the whole sequence that satisfies the required inequalities (as proven in the corresponding gradient based method by~\cite{nouiehed2019solving}).

%%%%%%%%%%%%%%%%%%%%%%%%%%%%%%%%%%%%%%%%%%%%%%%%%%%%%%%%%%%%
%%%%%%%%%%%%%%%%%%%%%%%%%%%%%%%%%%%%%%%%%%%%%%%%%%%%%%%%%%%%
\begin{figure*}[t!]
    \centering
    \includegraphics[width=1\textwidth]{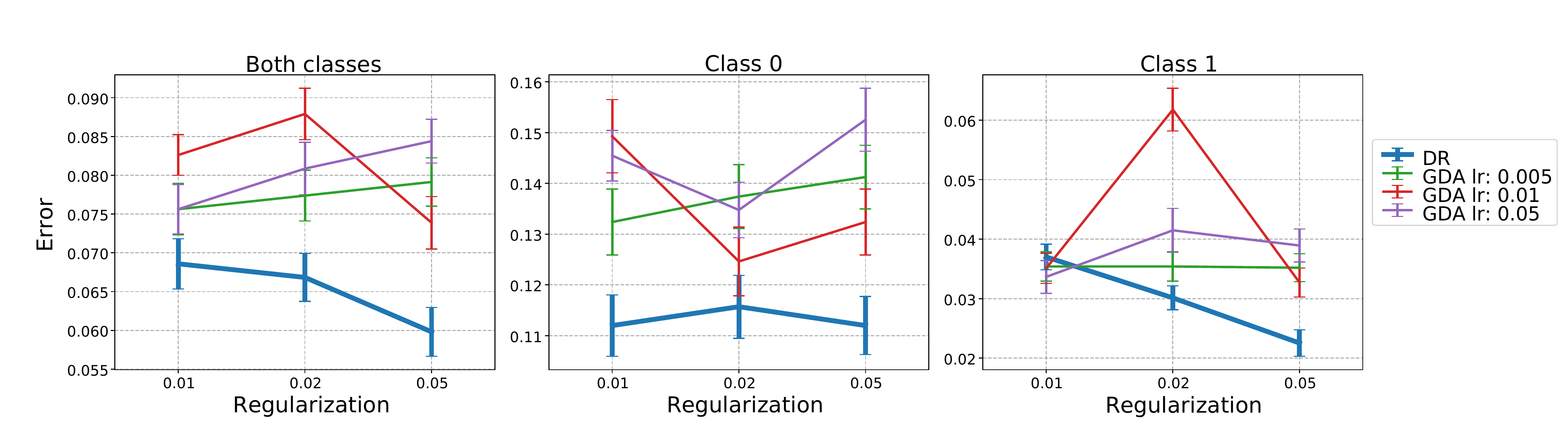}
    \caption{Zero-one loss for each method across classes. The term "lr" stands for different learning rates used. Error bars correspond to 20\% of the standard deviation across 10-fold cross validation.}
    \label{fig:robust-optimization}
\end{figure*}

% \vspace{-2mm}
\section{Experiments}
% \vspace{-2mm}
One advantage of direct-search methods is their abilities to explore the space of parameters. This however comes at the price of a high dependency to the size of the parameter space~\citep{vicente2013worst}. For nonconvex optimization problems in $\R^n$, the complexity of DS methods is of the order $\mathcal{O}(n^2)$ \citep{dodangeh2016optimal}. However, recent works by~\cite{gratton2015direct, bergou2019stochastic} have shown that replacing the sampling procedure from a PSS by one that correlates with the gradient direction probabilistically, it is possible to achieve a dependence of the order $\mathcal{O}(n)$. The sequential aspect of our method allows us to adopt this probabilistic perspective for the experiments to follow, thus lowering the computation cost.

% \vspace{-2mm}
\subsection{Robust Optimization}

Robustly-regularized estimators have been successfully used in prior work~\citep{namkoong2017variance} to deal with situations in which the empirical risk minimizer is susceptible to high amounts of noise. Formally, the problem of empirical risk minimization can be formulated as follows,
\begin{align}
    \min_{\pmb{\theta}} \sup_{P \in \mathcal{P}} [ f(\Xm; \pmb{\theta}, \mathcal{P}) = \{ &\mathbb{E}_{\mathcal{P}}[\mathit{l}(\Xm;  \pmb{\theta})]\,:\nonumber \\ 
    &\, \infdiv{\mathcal{P}}{\hat{\mathcal{P}}_n} \leq \frac{\rho}{n}\}],
    \label{eq:ERM}
\end{align}
where $\mathit{l}(\Xm;  \pmb{\theta})$ denotes the loss function, $\Xm$ the data and $\infdiv{\mathcal{P}}{\hat{\mathcal{P}}_n}$ a distance function that measures the divergence between the true data distribution $\mathcal{P}$ and the empirical data distribution $\mathcal{P}_n$. For the specific case of a binary classification problem, as for instance considered in~\cite{adolphs2018local}, Eq.~\eqref{eq:ERM} can be reformulated as
\begin{align*}
    \min_{\pmb{\theta}} \max_{\p} \{ &- \sum_{i=1}^n p_i[y_i \log (\hat{y}(\Xm_i;\pmb{\theta})) + (1 - y_i)  \\
    & \log (1 - \hat{y}(\Xm_i;\pmb{\theta}))] - \lambda \sum_{i=1}^n \left(p_i - \frac{1}{n} \right)^2 \},
\end{align*}
where $y_i$ and $\hat{y}(\Xm_i;\pmb{\theta})$ correspond to the true and the predicted class of data point $\Xm_i$ and $\lambda > 0$ controls the amount of regularization. Note that the aforementioned function is strongly-concave w.r.t $\p$ (i.e. it satisfies our PL assumption) and can thus be solved efficiently. We consider this optimization problem on the Wisconsin breast cancer data set\footnote{\url{https://archive.ics.uci.edu/ml/datasets/Breast+Cancer+Wisconsin+(Diagnostic)}}, comparing the performance between our proposed direct-search method and GDA, using the same neural network as classifier. The zero-one loss is shown in Fig.~\ref{fig:robust-optimization} which clearly shows that our algorithm can consistently outperform GDA for different choices of regularization parameters.

\subsection{Categorical Data}
Generative Adversarial Networks~\citep{goodfellow2014generative} are formulated as the saddle point problem:

\begin{align}
    \min_{\x} \max_{\y} f(\x, \y) = &\mathbb{E}_{\pmb{\theta} \sim p_{data}} [\log \D_{\y}(\pmb{\theta})] \nonumber \\
    &+ \mathbb{E}_{\z \sim p_{\z}} [\log (1 - \D_{\y}(\G_{\x}(\z)))] \nonumber,
\end{align}
where $\D_{\y} : \R^n \to [0, 1]$ and $\G_{\x} : \R^m \to \R^n$ are the discriminator and generator networks. Although GANs have been used in a wide variety of applications~\citep{goodfellow2016nips}, very few approaches can deal with discrete data. The most severe impeding factor in such settings is the non existence of the gradient due to the non-smooth nature of the objective function. One advantage of direct-search techniques over gradient-based methods is that they can be used in such a context where gradients are not accessible. In some cases, we note that $\ell_2$ regularization can be used to increase the smoothness constant of the objective function.%, see the experimental results reported in the Appendix.
We illustrate the performance of our direct-search algorithm on a simple example consisting of correlated categorical data, in Figure~\ref{fig:distributions-discrete-gaussian-dr}. For a more detailed discussion and more experimental results we refer the reader to the Appendix.
\begin{figure*}[ht!]
    \centering
    \includegraphics[width=1\textwidth]{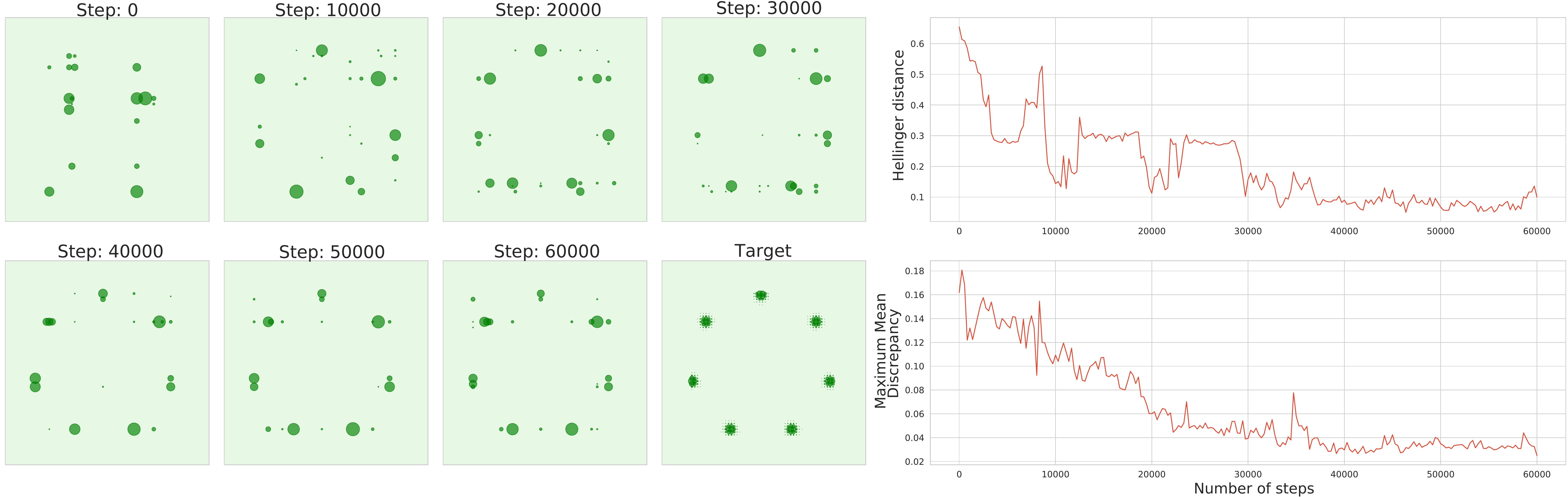}
    \caption{Learning a discretized mixture of Gaussian processes using direct-search methods. Both the Hellinger distance and maximum mean discrepancy decrease as DR learns the modes of the distribution.}%. To respect the ordinal nature of our data, the Hellinger distance is calculated at a component level by grouping the probability mass that correspond to discrete points generated from the same component.}
    \label{fig:distributions-discrete-gaussian-dr}
\end{figure*}

Scaling direct-search to higher dimensions still remains an active area of research, where recent developments include guided search~\citep{maheswaranathan2018guided} and projection-based approaches~\citep{wang2016bayesian}. In this work, we focus on the theoretical guarantees or our algorithm in the stochastic min-max setting. While we demonstrate a good empirical behavior on relatively small-scale problems, scaling our algorithm to large-scale problems will require further modifications to improve its scalability.

%%%%%%%%%%%%%%%%%%%%%%%%%%%%%%%%%%%%%%%%%%%%%%%%%%%%%%%%%%%%
%%%%%%%%%%%%%%%%%%%%%%%%%%%%%%%%%%%%%%%%%%%%%%%%%%%%%%%%%%%%

\section{Conclusion}
% \vspace{-1mm}

We presented and proved convergence results for a direct-search method in a stochastic minimization setting for both nonconvex and PL objective functions. We then extended these results to prove convergence for min-max objective functions, where the objective of the max-player satisfies the (PL) condition, while the min-player objective is nonconvex. Our experimental results establish that direct-search can outperform traditionally adopted optimization schemes, while also presenting a promising alternative for categorical settings. A potential direction for future work is to improve the scalability of our algorithm in order to run it on large-scale problems, such as adversarial poisoning attacks on benchmark computer vision datasets. Additional extensions of our work include the use of momentum to accelerate convergence as in~\cite{gidel2018negative} or developing an optimistic variant of our algorithm as in~\cite{daskalakis2017training, daskalakis2018limit}.

\section{Acknowledgements}

Sotiris Anagnostidis is supported by by the Onassis Foundation - Scholarship ID: F ZP 002-1/2019-2020.

{\small
\bibliographystyle{plainnat}
\bibliography{bibliography}
}

\newpage

\begin{appendices}
% \title{Direct-Search for a Class of Stochastic Min-Max Problems: \\
% Supplementary Materials}
% \maketitle

\vspace{-10mm}
\section{Algorithms} \label{app:algos}

We present omitted algorithms. Algorithm~\ref{algo:one-step-DR} depicts the updates for the minimization problem. At each outer iteration of Algorithm~\ref{algo:min-max-DR}, a single successful step for the minimization problem is performed. In contrast to standard direct-search algorithms, we do not increase the step size parameter immediately after a successful step, but instead, before the start of the next search for a new successful step, to simplify notation for the upcoming proofs.

\begin{algorithm}
\SetAlgoLined
\caption{{\sc One-Step-Direct-search($f, \x_0, \sigma_0$)}}
\label{algo:one-step-DR}
\KwIn{$f$: objective function, with $f_k$ it's estimate at step $k$}
\myinput{$\x$: initial point}
\myinput{$\sigma_0$: step size value}
$c$: forcing function constant\\
$\gamma > 1$: step size update parameter\\

Create the Positive Spanning Set $\D$ for the variables $\x$.\\
Update $\sigma_{1} = \min\{\gamma \sigma_0, \sigma_{\max} \}$ as last update was successful.

\For{k = 1, \dots}{
\textbf{1. Offspring generation:}\\
Generate the points $$ \x^i = \x + \sigma_k \d^i, \quad \forall \d^i \in \D.$$ \\
\textbf{2. Parent Selection:}\\
Choose $\x^{\prime} = \argmin_i f_k(\x^i)$.\\

\textbf{3. Sufficient Decrease:}
\uIf{ $f_k(\x^{\prime}) < f_k(\x) - \rho(\sigma_k)$ }{
    (Iteration is successful)\\
    \Return $\x^{\prime}, \sigma_{k}$.
}\Else{
    (Iteration is unsuccessful)\\
    Decrease step size $\sigma_{k+1} = \gamma^{-1} \sigma_k$.
}}
\end{algorithm}

\section{Proofs of Section~\ref{sec:convergence-min}} \label{app:proofs-min}

\subsection{Proof of Lemma~\ref{lemma:bound-absolute}}

\begin{proof}
The result follows by applying Holder's inequality.

\begin{equation*}
    \mathbb{E} \left[\frac{1_{J_k^c}|F_k^0 - f(\X_k)|}{l_f \Sigma_k^2} \mid \F_{k-1} \right] \leq \left( \mathbb{E}[1_{J_k^c}|\F_{k-1}] \right)^{1/2} \left( \mathbb{E} \left[ \frac{|F_k^0 - f(\X_k)|^2}{l_f^2 \Sigma_k^4}\right]\right)^{1/2}.
\end{equation*}

By Assumption~\ref{ass:variance-condition}, it holds that $\left( \mathbb{E} \left[ \frac{|F_k^0 - f(\x_k)|^2}{l_f^2 \Sigma_k^4}\right]\right)^{1/2} \leq 1$ and the result follows. Following the same steps, the second inequality of the Lemma holds as well.

\end{proof}

\subsection{Proof of Theorem~\ref{theorem:pfi-k-nonconvex}}

\begin{proof}
We begin by taking separate cases according to if the estimates are accurate or not and if the steps of Algorithm~\ref{algo:direct-search} are successful or not. We use $1_{\text{Succ}_k}$ to denote the event that step $k$ is successful.

\textbf{Case 1: Accurate estimates.}

\begin{itemize}
    \item \textbf{Successful step.}
    
    At a successful step with accurate estimates we have that:
    
    \begin{align*}
        1_{\text{Succ}_k} 1_{J_k} &(f(\X_{k + 1}) - f(\X_k)) \\ &= 1_{\text{Succ}_k} 1_{J_k} (f(\X_{k + 1}) - f_k(\X_{k + 1}) + f_k(\X_{k + 1}) - f_k(\X_k) + f_k(\X_k) - f(\X_k)) \\ 
        &\leq 1_{\text{Succ}_k} 1_{J_k}(- ( c - 2\epsilon_f) \Sigma_k^2).
    \end{align*}

    Therefore

    \begin{align*}
        1_{\text{Succ}_k} 1_{J_k} &(\Phi_{k + 1} - \Phi_k) \\ &= 1_{\text{Succ}_k} 1_{J_k} (v(f(\X_{k + 1}) - f(\X_k)) + (1 - v) \Sigma_{k + 1}^2 - (1 - v) \Sigma_k^2) \\
        &\leq 1_{\text{Succ}_k} 1_{J_k}(- v (c - 2 \epsilon_f) \Sigma_k^2 + (1 - v) (\gamma^2 - 1) \Sigma_k^2).
    \end{align*}

    \item \textbf{Unsuccessful step.}

    \begin{align*}
        1_{\text{Succ}_k^c} 1_{J_k} (\Phi_{k + 1} - \Phi_k) &= 1_{\text{Succ}_k^c} 1_{J_k}((1 - v) \Sigma_{k + 1}^2 - (1 - v) \Sigma_k^2) \\
        &= 1_{\text{Succ}_k^c} 1_{J_k}(- (1 - v) (1 - \frac{1}{\gamma^2}) \Sigma_k^2).
    \end{align*}
   
\end{itemize}

Combining the above results and given that

\begin{equation*}
    \frac{v}{1 - v} \geq \frac{1}{c - 2 \epsilon_f}(\gamma^2 - \frac{1}{\gamma^2}) \implies - v(c - 2\epsilon_f) + (1 - v)(\gamma^2 - 1) \leq - (1 - v) (1 - \frac{1}{\gamma^2}),
\end{equation*}

in the case of accurate estimates we have

\begin{equation}
    \mathbb{E}[1_{J_k}(\Phi_{k + 1} - \Phi_k) \mid \F_{k - 1}] \leq - p_f (1 - v) (1 - \frac{1}{\gamma^2}) \Sigma_k^2.
\end{equation}

\textbf{Case 2: Inaccurate estimates.}

\begin{itemize}
    \item  \textbf{Successful step.}

    \begin{align*}
        1_{\text{Succ}_k} 1_{J_k^c} (\Phi_{k + 1} - \Phi_k) &= 1_{\text{Succ}_k} 1_{J_k^c} (v(f(\X_{k + 1}) - f(\X_k)) + (1 - v) \Sigma_{k + 1}^2 - (1 - v) \Sigma_k^2) \\
        &= 1_{\text{Succ}_k} 1_{J_k^c} (v(f(\X_{k + 1}) - f_k(\X_{k + 1}) + f_k(\X_{k + 1}) - f_k(\X_k)  \\ &\quad \quad \quad + f_k(\X_k) - f(\X_k)) + (1 - v) \Sigma_{k + 1}^2 - (1 - v) \Sigma_k^2) \\
        &\leq 1_{\text{Succ}_k} 1_{J_k^c} (- v c \Sigma_k^2 + v|f(\X_{k + 1}) - f_k(\X_{k + 1})| + v |f(\X_{k}) - f_k(\X_{k})| \\ &\quad \quad \quad - (1 - v) (\gamma^2 - 1) \Sigma_k^2),
    \end{align*}
    
    where we will later bound terms $|f(\X_{k + 1}) - f_k(\X_{k + 1})|, |f(\X_{k}) - f_k(\X_{k})|$ using Lemma~\ref{lemma:bound-absolute}.
        
    \item \textbf{Unsuccessful step.}
    
    As before:

    \begin{align*}
        1_{\text{Succ}_k^c} 1_{J_k^c} (\Phi_{k + 1} - \Phi_k) &= 1_{\text{Succ}_k^c} 1_{J_k^c} ((1 - v) \Sigma_{k + 1}^2 - (1 - v) \Sigma_k^2) \\
        &= 1_{\text{Succ}_k^c} 1_{J_k^c} (- (1 - v) (1 - \frac{1}{\gamma^2}) \Sigma_k^2).
    \end{align*}
\end{itemize}

% and given that

% \begin{equation*}
%     \frac{v}{1 - v} \geq \frac{1}{c - 2 \epsilon_f}(\gamma^2 - \frac{1}{\gamma^2}) > \frac{1}{c}(\gamma^2 - \frac{1}{\gamma^2}) \implies - vc + (1 - v)(\gamma^2 - 1) \leq - (1 - v) (1 - \frac{1}{\gamma^2}),
% \end{equation*}
In total for inaccurate estimates and by using Assumption~\ref{ass:variance-condition} and Lemma~\ref{lemma:bound-absolute}

\begin{equation}
    \mathbb{E}[1_{J_k^c}(\Phi_{k + 1} - \Phi_k) \mid \F_{k - 1}] \leq  2v(1 - p_f)^{1/2} l_f \Sigma_k^2.
\end{equation}

Finally, integrating both successful and unsuccessful iterations 

\begin{align*}
    \mathbb{E}[\Phi_{k + 1} - \Phi_k \mid \F_{k - 1}] &\leq - p_f (1 - v) (1 - \frac{1}{\gamma^2}) \Sigma_k^2 + 2v(1 - p_f)^{1/2} l_f \Sigma_k^2 \\
    &\leq - p_f (1 - v) (1 - \frac{1}{\gamma^2}) \frac{\Sigma_k^2}{2},
\end{align*}

for our requirement of $\frac{p_f}{\sqrt{1 - p_f}} \geq \frac{4v l_f}{(1 - v) (1 - \gamma^{-2})}$.

% \textcolor{red}{Rearranging
% $$
% \frac{p_f}{\sqrt{1 - p_f}} \geq \frac{4v l_f}{(1 - v) (1 - \gamma^{-2})}
% $$
% we get
% $$
% p_f (1 - v) (1 - \gamma^{-2}) \geq (4v l_f) \sqrt{1 - p_f}
% $$
% but we have a $2$ instead of a $4$ above.
% }
% \textcolor{blue}{
% \begin{align*}
%     \mathbb{E}[\Phi_{k + 1} - \Phi_k | \F_{k - 1}] &\leq - p_f (1 - v) (1 - \frac{1}{\gamma^2}) \frac{\Sigma_k^2}{2} \underbrace{- p_f (1 - v) (1 - \frac{1}{\gamma^2}) \frac{\Sigma_k^2}{2} + 2v(1 - p_f)^{1/2} l_f \Sigma_k^2 }_{\leq 0} 
% \end{align*}
% }

\end{proof}

\subsection{Proof of Lemma~\ref{lemma:unsuccessful-step}}

\begin{proof}
Similar to~\cite{conn2009introduction}, for an unsuccessful step with accurate estimates, we have that for some $\d_k \in \D$

\begin{equation}
\cm(\D) \| \nabla f(\x_k) \| \| \d_k \| \leq - \nabla f(\x_k)^\top \d_k.
\label{eq:d_k_positive_spanning_set}
\end{equation}

By the mean value theorem, for some $\eta_k \in [0,1]$,
\begin{equation*}
f(\x_k + \sigma_k \d_k) - f(\x_k) = \sigma_k \nabla f(\x_k + \eta_k \sigma_k \d_k)^\top \d_k.
\end{equation*}

Since $k$ is the index of an unsuccessful iteration,
\begin{equation*}
f_k(\x_k + \sigma_k \d_k) - f_k(\x_k) + \rho(\sigma_k) \geq 0
\end{equation*}

and since estimates are accurate

\begin{align*}
    f(\x_k + \sigma_k \d_k) - f(\x_k) &= f(\x_k + \sigma_k \d_k) - f_k(\x_k + \sigma_k \d_k) \\ & \quad \quad \quad + f_k(\x_k + \sigma_k \d_k) - f_k(\x_k) + f_k(\x_k) - f(\x_k) \\
    &\geq  - \epsilon_f \sigma_k^2 - \rho(\sigma_k) - \epsilon_f \sigma_k^2 \\ 
    &= - (c + 2 \epsilon_f) \sigma_k^2.
\end{align*}

Combining the above equations,

\begin{align}
&\sigma_k \nabla f(\x_k + \eta_k \sigma_k \d_k)^\top \d_k + (c + 2 \epsilon_f) \sigma_k^2 \geq 0 \nonumber \\
\implies &  \nabla f(\x_k + \eta_k \sigma_k \d_k)^\top \d_k + (c + 2 \epsilon_f) \sigma_k \geq 0 \nonumber \\
\implies &  - \nabla f(\x_k)^\top \d_k \leq (\nabla f(\x_k + \eta_k \sigma_k \d_k) - \nabla f(\x_k))^\top \d_k + (c + 2 \epsilon_f) \sigma_k,
\end{align}

where in the last inequality, we subtracted $\nabla f(\x_k)^\top \d_k$ from both sides.

Finally, Eq.~\eqref{eq:d_k_positive_spanning_set} implies
\begin{align*}
\cm(\D) \| \nabla f(\x_k) \| \| \d_k \| &\leq (\nabla f(\x_k + \eta_k \sigma_k \d_k) - \nabla f(\x_k))^\top \d_k + (c + 2 \epsilon_f) \sigma_k \\
\implies \cm(\D) \| \nabla f(\x_k) \| &\leq \| \nabla f(\x_k + \eta_k \sigma_k \d_k) - \nabla f(\x_k) \| + (c + 2 \epsilon_f) \sigma_k \\
&\leq \frac{L}{2} \sigma_k + (c + 2 \epsilon_f) \sigma_k.
\end{align*}
\end{proof}

\subsection{Proof of Theorem~\ref{theorem:convergence-nonconvex}}

\begin{proof}
By Theorem~\ref{theorem:pfi-k-nonconvex} and Lemma 4.10 from~\cite{paquette2018stochastic} we get that Assumption~\ref{ass:iterates} is satisfied, for $\Sigma_{\epsilon} = C \epsilon$. Then by an application of Theorem~\ref{theorem:stopping-time} we get

\begin{equation*}
    \mathbb{E}[T_{\epsilon}] \leq \frac{p_f}{2 p_f - 1} \frac{v(f(\X_0) - f^*) + (1 - v) \Sigma_0^2}{p_f(1 - v)(1 - \frac{1}{\gamma^2}) \frac{C^2 \epsilon^2}{2}}.
\end{equation*}

The result follows.

\end{proof}

\subsection{Proof of Theorem~\ref{theorem:convergence-pl}}

We will also use the additional result holding for any function with Lipschitz-continuous gradients.

\begin{lemma} \label{lemma:bound-norm-by-distance}
Let $f: \x \in \R^n \to \R$ be a continuous differentiable function with Lipschitz continuous gradient with a constant $L$ and a minimum value achieved for $\x^*$. Then

\begin{equation}
    f(\x) - f(\x^*) \geq \frac{1}{2L} \| \nabla f(\x) \|^2.
\end{equation}
\end{lemma}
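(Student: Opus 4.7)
The plan is to use the standard descent lemma that follows from the Lipschitz gradient assumption, then minimize the resulting quadratic upper bound in a single step. Concretely, smoothness of $f$ with constant $L$ implies the quadratic upper bound
$$f(\y) \leq f(\x) + \nabla f(\x)^\top (\y - \x) + \tfrac{L}{2} \|\y - \x\|^2 \quad \forall \x, \y \in \R^n,$$
which is a routine consequence of the fundamental theorem of calculus applied to $t \mapsto f(\x + t(\y - \x))$ together with the Lipschitz bound on $\nabla f$. I would state this inequality as the starting point.

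Next I would evaluate this bound at the specific point $\y = \x - \tfrac{1}{L} \nabla f(\x)$, i.e. the minimizer of the right-hand side viewed as a quadratic in $\y$. Substituting gives
$$f\bigl(\x - \tfrac{1}{L} \nabla f(\x)\bigr) \leq f(\x) - \tfrac{1}{L}\|\nabla f(\x)\|^2 + \tfrac{L}{2} \cdot \tfrac{1}{L^2} \|\nabla f(\x)\|^2 = f(\x) - \tfrac{1}{2L}\|\nabla f(\x)\|^2.$$
Since $\x^*$ is a global minimizer, $f(\x^*) \leq f(\x - \tfrac{1}{L}\nabla f(\x))$, and chaining these two inequalities and rearranging yields the claim
$$f(\x) - f(\x^*) \geq \tfrac{1}{2L}\|\nabla f(\x)\|^2.$$

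There is no real obstacle here; the only subtle point is that one must have $f$ defined on all of $\R^n$ (or at least on a ball around $\x$ of radius $\|\nabla f(\x)\|/L$) so that $\x - \tfrac{1}{L}\nabla f(\x)$ lies in the domain, which is guaranteed by the hypothesis that $f$ is defined on $\R^n$ (consistent with Assumption~\ref{ass:bounded} used elsewhere in the paper). The proof is otherwise a two-line application of the descent lemma and is standard; I would simply write it out cleanly and invoke the existence of $\x^*$ from the hypothesis.
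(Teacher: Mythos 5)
Your proof is correct and is essentially identical to the paper's: both apply the descent lemma at the test point $\y = \x - \frac{1}{L}\nabla f(\x)$ and use $f(\x^*) \leq f(\y)$ to chain the inequalities. The only difference is cosmetic ordering of the two steps, so there is nothing further to add.
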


\begin{proof}
By smoothness and for $\y = \x - \frac{1}{L} \nabla f(\x)$ we have

\begin{align*}
    f(\x) - f(\x^*) &\geq f(\x) - f(\y) \\
    &\geq \langle \nabla f(\x), \x - \y \rangle -\frac{L}{2} \| \y - \x \|^2 \\
    &= \frac{1}{L} \| \nabla f(\x) \|^2 - \frac{1}{2 L} \| \nabla f(\x) \|^2 \\
    &= \frac{1}{2L} \| \nabla f(\x) \|^2.
\end{align*}

\end{proof}

We can now proceed with the proof of Theorem~\ref{theorem:convergence-pl}.

\begin{proof}
We note that for the conditions on the constants $c$, $v$ and $p_f$, requirements of Theorem~\ref{theorem:pfi-k-nonconvex} are also satisfied. We define as $T_i = \inf\{ k \geq 0:\, f(\X_k) - f^* \leq \frac{f(\X_0) - f^*}{2^i}\}$, with $T_0 = 0$. We will also use the random variable $\Lambda_i = T_i - T_{i - 1}$.

We will assume without loss of generality that 

\begin{equation}
    \Sigma_0^2 \leq \frac{9 \gamma^2}{c} (f(\X_0) - f^*) \triangleq A (f(\X_0) - f^*),
\end{equation}
for $A = \frac{9 \gamma^2}{c}$. We apply Theorem~\ref{theorem:stopping-time}. Given that $f(\X_{T_{i - 1}}) - f^* \leq \frac{f(\X_{T_{0}}) - f^*}{2^{i - 1}}$, and that $f(\X_{k}) - f^* > \frac{f(\X_{T_{0}}) - f^*}{2^{i}}$ for $k \in [T_{i - 1}, T_i)$ (possibly an empty set), Lemma~\ref{lemma:unsuccessful-step} and the Definition~\ref{def:PL-condition}, then for step sizes $\Sigma_k^2 \leq C^2 \mu \frac{f(\X_0) - f^*}{2^{i - 1}}$ and accurate estimates, steps are successful. Then by Theorem~\ref{theorem:stopping-time} for an application of the results from Theorem~\ref{theorem:pfi-k-nonconvex} as before, we have

\begin{align}
    \mathbb{E}[\Lambda_i \mid \F_{T_{i - 1} - 1}] &\leq \frac{p_f}{2p_f - 1} \frac{v(f(\X_{T_{i - 1}}) - f^*) + (1 - v) \Sigma_{T_{i - 1}}^2}{p_f (1 - v) (1 - \gamma^{-2}) \frac{1}{2} C^2 \mu \frac{f(\X_0) - f^*}{2^{i - 1}}} \nonumber \\
    &= \frac{2}{(2p_f - 1) (1 - \gamma^{-2})  C^2 \mu} \left( \frac{v}{1-v} \frac{f(\X_{T_{i - 1}}) - f^*}{  \frac{f(\X_0) - f^*}{2^{i - 1}}}  + \frac{\Sigma_{T_{i - 1}}^2}{  \frac{f(\X_0) - f^*}{2^{i - 1}}} \right) \nonumber \\
    &\leq \frac{2}{(2p_f - 1) (1 - \gamma^{-2})  C^2 \mu} \left( \frac{v}{1-v} \frac{\frac{f(\X_0) - f^*}{2^{i - 1}}}{  \frac{f(\X_0) - f^*}{2^{i - 1}}}  + \frac{\Sigma_{T_{i - 1}}^2}{  \frac{f(\X_0) - f^*}{2^{i - 1}}} \right) \nonumber \\
    &= \frac{2}{(2p_f - 1) (1 - \gamma^{-2})  C^2 \mu} \left( \frac{v}{1-v} + \frac{\Sigma_{T_{i - 1}}^2}{  \frac{f(\X_0) - f^*}{2^{i - 1}}} \right)
\end{align}

We will further show with induction that $\mathbb{E} [\Sigma_{T_i}^2] \leq A \frac{f(\X_0) - f^*}{2^{i}}$. As a result

\begin{align}
    \mathbb{E}[\Lambda_i] &\leq \frac{2}{(2p_f - 1) (1 - \gamma^{-2})  C^2 \mu} \left( \frac{v}{1-v}   + \frac{\mathbb{E}[\Sigma_{T_{i - 1}}^2]}{  \frac{f(\X_0) - f^*}{2^{i - 1}}} \right) \nonumber \\
    &\leq \frac{2}{(2p_f - 1) (1 - \gamma^{-2})  C^2 \mu} \left( \frac{v}{1-v} + A \right).
\end{align}

The final complexity will be:

\begin{align}
    \mathbb{E}[T_{\ceil{\log\frac{2L(f(\X_0) - f^*)}{\epsilon^2}}}] &= \mathbb{E}[\Lambda_1 + \Lambda_2 + \dots + \Lambda_{\ceil{\log\frac{2L(f(\X_0) - f^*)}{\epsilon^2}}}] \nonumber \\
    &\leq \frac{2}{(2p_f - 1) (1 - \gamma^{-2})  C^2 \mu} \left( \frac{v}{1-v} + A \right) \ceil{\log\left(\frac{2L(f(\X_0) - f^*)}{\epsilon^2}\right)}.
\end{align}
Getting $\| \nabla f(\x_0) \|^2 \leq 2L(f(\x_0) - f^*)$ from Lemma~\ref{lemma:bound-norm-by-distance}, the result follows.

It remains to show the result that $\mathbb{E} [\Sigma_{T_i}^2] \leq A \frac{f(\X_0) - f^*}{2^{i}}$. By assumption, as aforementioned, it holds for $T_0$. We then assume that it holds for $T_{i - 1}$ and show that it also holds for $T_i$. For each $T_i$, the last step $T_i - 1$ was a successful one as the parameter $\X$ was updated to satisfy the goal $f(\X_{T_i}) - f^* \leq \frac{f(\X_0) - f^*}{2^{i}}$. As in Theorem~\ref{theorem:pfi-k-nonconvex} we differentiate between the events of this step being accurate or not. 

Since we have a successful step $f_{T_i - 1}(\X_{T_i}) - f_{T_i - 1}(\X_{T_i - 1}) \leq - c \Sigma_{T_i - 1}^2$. Then

\begin{align}
    f(\X_{T_i}) - f(\X_{T_i - 1}) &= f(\X_{T_i}) - f_{T_i - 1}(\X_{T_i}) + \nonumber \\ &\quad \quad f_{T_i - 1}(\X_{T_i}) - f_{T_i - 1}(\X_{T_i - 1}) + f_{T_i - 1}(\X_{T_i - 1}) - f(\X_{T_i - 1})
\end{align}

We denote with $p_{\text{Acc}}$ the probability of this last step being accurate. Note that this is not the same as $p_f$ as we are conditioning on a successful step. Then we distinguish the two cases. 

\begin{itemize}
    \item  \textbf{Accurate estimates.}
    
    By Assumption~\ref{def:accurate} we get that
    
    \begin{equation}
        1_{\text{Acc}} (f(\X_{T_i}) - f(\X_{T_i - 1})) \leq 1_{\text{Acc}} (- (c - 2 \epsilon_f) \Sigma_{T_i - 1}^2).
    \end{equation}

    \item \textbf{Inaccurate.}
    
    In this case, similarly to the proof of Lemma~\ref{lemma:bound-absolute} we get
    
    \begin{equation}
        \mathbb{E}[1_{\text{Acc}}^c (f(\X_{T_i}) - f(\X_{T_i - 1})) \mid \F_{T_i - 2}] \leq - (1 - p_{\text{Acc}}) c \Sigma_{T_i - 1}^2 + 2 \sqrt{1 - p_{\text{Acc}}} l_f \Sigma_{T_i - 1}^2.
    \end{equation}

    % \note{As the final step leads to improvement, this should also happen for inaccurate estimates. For the choice $c$ this is guaranteed only if $p_{\text{Acc}} < 0.5$. Is this automatically enforced or should we assume something more? Probably do not need to do anything more here, as it is in expectation.}
\end{itemize}

Combining the above cases, we get

\begin{align}
    \mathbb{E}[f(\X_{T_i}) - f(\X_{T_i - 1})\mid \F_{T_i - 2}] &\leq - p_{\text{Acc}} (c - 2 \epsilon_f) \Sigma_{T_i - 1}^2 - (1 - p_{\text{Acc}}) c \Sigma_{T_i - 1}^2 + 2 \sqrt{1 - p_{\text{Acc}}} l_f \Sigma_{T_i - 1}^2 \nonumber \\
    &\leq - c \Sigma_{T_i - 1}^2 (1 - \frac{p_{\text{Acc}}}{2} - \sqrt{\frac{1 - p_{\text{Acc}}}{2}}), \,\text{ for } c > \max\{4 \epsilon_f, 2 \sqrt{2} l_f\} \nonumber \\
    &\leq - \frac{c}{4} \Sigma_{T_i - 1}^2 \nonumber \\
    \implies \Sigma_{T_i - 1}^2 &\leq \frac{4 \mathbb{E}[f(\X_{T_i - 1}) - f^*| \F_{T_i - 2}]}{c}. \label{eq:bound-sigma}
\end{align}

Furthermore, by Theorem~\ref{theorem:pfi-k-nonconvex} we get that

\begin{align}
   \mathbb{E}[\Phi_{T_i - 1} \mid \F_{T_{i - 1} - 1}] &\leq \Phi_{T_{i - 1}} \nonumber \\
    \mathbb{E}[v(f(\X_{T_i - 1}) - f^*) \mid \F_{T_{i - 1} - 1}] &\leq v(f(\X_{T_{i - 1}}) - f^*) + (1 - v) \Sigma_{T_{i - 1}}^2 \nonumber \\
    \mathbb{E}[f(\X_{T_i - 1}) - f^*] &\leq \frac{f(\X_{0}) - f^*}{2^{i - 1}} + \frac{(1 - v)}{v} \mathbb{E}[ \Sigma_{T_{i - 1}}^2] \nonumber \\
    &\leq \frac{f(\X_{0}) - f^*}{2^{i - 1}} + \frac{(1 - v)}{v} A \frac{f(\X_{0}) - f^*}{2^{i - 1}} \nonumber \\
    &\leq \frac{f(\X_{0}) - f^*}{2^{i}} 2 (1 + \frac{1 - v}{v} A) \label{eq:bound-hx}.
\end{align}

By combining \eqref{eq:bound-sigma} and \eqref{eq:bound-hx} and using the law of iterated expectation we have that

\begin{align}
    \mathbb{E}[\Sigma_{T_i}^2] &= \mathbb{E}[\gamma^2 \Sigma_{T_i - 1}^2] \nonumber \\ 
    &\leq \frac{f(\X_{0}) - f^*}{2^{i}} \frac{8 \gamma^2}{c
    } (1 + \frac{1 - v}{v} A) \nonumber \\
    &\leq A \frac{f(\X_{0}) - f^*}{2^{i}},
\end{align}
for $\frac{v}{1 - v} \geq \frac{72 \gamma^2}{c}$ and $A = \frac{9 \gamma^2}{c}$. The proof is complete.

\end{proof}

\section{Proofs of Section~\ref{sec:convergence-min-max}} \label{app:proofs-min-max}

We first present some additional results, required for our proof.

From~\cite{karimi2015linear}, for a function that satisfies the PL condition, it additionally satisfies the Quadratic Growth (GQ) condition. 

\begin{lemma} \label{lemma:QG-condition}
A differentiable function $f$ that satisfies the PL condition with parameter $\mu$, also satisfies the QG condition with parameter $4\mu$:

$$f(\x) - f^* \geq 2 \mu \| \x^* - \x \|^2,$$

where $\x^*$ belongs to the solution set $\mathcal{X}^*$.
\end{lemma}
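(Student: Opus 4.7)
The plan is to derive the quadratic-growth bound via a continuous-time gradient-flow argument (the standard Karimi--Nutini--Schmidt approach). Define $\x(t)$ as the solution of $\dot{\x}(t) = -\nabla f(\x(t))$ with $\x(0) = \x$. First I would compute $\frac{d}{dt}(f(\x(t)) - f^*) = -\|\nabla f(\x(t))\|^2$, and invoke the PL inequality $\tfrac12\|\nabla f\|^2 \geq \mu(f - f^*)$ to get $f(\x(t)) - f^* \leq e^{-2\mu t}(f(\x) - f^*)$. In particular $f(\x(t)) \to f^*$.

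Next, and this is the key calculation, I would differentiate $\sqrt{f(\x(t)) - f^*}$ along the flow. Using chain rule and again PL,
\begin{equation*}
\frac{d}{dt}\sqrt{f(\x(t)) - f^*} \;=\; -\frac{\|\nabla f(\x(t))\|^2}{2\sqrt{f(\x(t)) - f^*}} \;\leq\; -\frac{\sqrt{2\mu}}{2}\,\|\nabla f(\x(t))\| \;=\; -\sqrt{\mu/2}\,\|\dot{\x}(t)\|.
\end{equation*}
Integrating from $0$ to $\infty$ and using that the left-hand side converges to $0$ (by paragraph one), I would obtain the arc-length bound
\begin{equation*}
\sqrt{f(\x) - f^*} \;\geq\; \sqrt{\mu/2}\,\int_0^\infty \|\dot{\x}(t)\|\,dt.
\end{equation*}

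Finally, since the arc length is finite, $\x_\infty := \lim_{t\to\infty}\x(t)$ exists, and by continuity of $f$ together with the exponential decay, $\x_\infty \in \mathcal{X}^*$. Letting $\x^*$ denote the projection of $\x$ onto $\mathcal{X}^*$, the triangle inequality gives $\|\x^* - \x\| \leq \|\x_\infty - \x\| \leq \int_0^\infty \|\dot{\x}(t)\|\,dt$. Squaring then yields $f(\x) - f^* \geq \tfrac{\mu}{2}\|\x^* - \x\|^2$, which is the claimed quadratic-growth inequality (modulo the constant convention in the paper's statement of the QG parameter).

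The main obstacle is justifying that the trajectory $\x(t)$ actually converges to a single limit point in $\mathcal{X}^*$, which is nontrivial because $\mathcal{X}^*$ need not be a singleton and PL-type objectives can have flat valleys. This is resolved precisely by the arc-length bound from the second step: finite arc length implies $\x(t)$ is Cauchy, so convergence holds and the triangle-inequality step is legitimate. The rest of the argument is routine calculus.
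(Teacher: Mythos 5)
The paper gives no proof of this lemma at all --- it simply cites \cite{karimi2015linear} --- and your argument is precisely the gradient-flow proof from that reference, so the approach is the ``same'' one in the only meaningful sense available. The structure is sound: the arc-length bound obtained by differentiating $\sqrt{f(\x(t))-f^*}$ is exactly what justifies convergence of the trajectory to a single point of $\mathcal{X}^*$, and your identification of $\x^*$ with the projection of $\x$ onto $\mathcal{X}^*$ is the correct reading of the statement (the inequality is false for an arbitrary element of the solution set).

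The one point you should not wave away is the constant. Your derivation yields $f(\x)-f^* \geq \tfrac{\mu}{2}\|\x^*-\x\|^2$, whereas the lemma asserts $f(\x)-f^* \geq 2\mu\|\x^*-\x\|^2$; with the paper's own PL normalization $\tfrac12\|\nabla f\|^2 \geq \mu(f-f^*)$ this is a genuine factor-of-$4$ gap, not a ``convention.'' In fact the stated inequality cannot be proved because it is false: $f(x)=\tfrac{\mu}{2}x^2$ is $\mu$-PL and satisfies QG with equality at constant $\tfrac{\mu}{2}$, so $\tfrac{\mu}{2}$ is tight and $2\mu$ is unattainable. The error lies in the paper's statement rather than in your argument; the only downstream consequence is that Lemma~\ref{lemma:pl-epsilon} should read $\|\x-\x^*\|\leq \epsilon/\mu$ instead of $\epsilon/(2\mu)$, which affects nothing but constants. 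You should also note, for completeness, that global existence of the flow and differentiability of $\sqrt{f(\x(t))-f^*}$ require $\nabla f$ locally Lipschitz and $f(\x(t))>f^*$ (if the trajectory hits $\mathcal{X}^*$ in finite time one simply integrates up to that time); these are standard and harmless given the smoothness assumptions used everywhere else in the paper.
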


% \begin{lemma} \label{lemma:qgr}
% A strongly convex function $v$ satisfies the quadratic growth condition with constant equal to the strong convexity parameter:

% $$v(\z) - v(\z^*) \geq \frac{\mu}{2}\| \z - \z^*\|^2$$
% \end{lemma}
% \begin{proof}
% Direct consequence of the definition of strong convexity:
% \end{proof}

Based on the previous Lemma, we can easily prove the following result.

\begin{lemma} \label{lemma:pl-epsilon}
Let a differentiable $\mu$-PL function $f$ and also $\x^* \in \argmin_{\x} f(\x)$. If we know that $\| \nabla f(\x) \| \leq \epsilon$ then:

$$\| \x - \x^* \| \leq \frac{1}{2 \mu} \epsilon.$$
\end{lemma}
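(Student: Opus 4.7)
The plan is to chain the PL inequality (which is the hypothesis) with the Quadratic Growth consequence recorded just above as Lemma~\ref{lemma:QG-condition}. Together these bound $\|\nabla f(\x)\|$ below by a constant multiple of $\|\x - \x^*\|$, at which point substituting the assumed upper bound $\|\nabla f(\x)\| \le \epsilon$ and dividing gives the claim. No iterative or limiting argument is needed — the whole proof is a two-line algebraic manipulation.

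Concretely, I would first rewrite the $\mu$-PL condition in the equivalent form $\|\nabla f(\x)\|^2 \geq 2\mu\bigl(f(\x) - f^*\bigr)$. Next, I would invoke Lemma~\ref{lemma:QG-condition}, which gives $f(\x) - f^* \geq 2\mu\|\x - \x^*\|^2$, and plug this lower bound into the right-hand side of the previous inequality. This produces
$$\|\nabla f(\x)\|^2 \;\geq\; 4\mu^2 \|\x - \x^*\|^2,$$
and taking the (nonnegative) square root yields $\|\nabla f(\x)\| \geq 2\mu \|\x - \x^*\|$.

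Finally, I would combine this with the hypothesis $\|\nabla f(\x)\| \leq \epsilon$ to obtain $2\mu\|\x - \x^*\| \leq \epsilon$, which rearranges to the desired $\|\x - \x^*\| \leq \epsilon/(2\mu)$.

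There is essentially no obstacle here; the only care needed is bookkeeping of the constants. The PL inequality contributes a factor $2\mu$ between squared gradient norm and function gap, the stated QG inequality contributes a factor $2\mu$ between function gap and squared distance, and these multiply to $4\mu^2$, which becomes $2\mu$ after taking a square root — exactly matching the constant appearing in the target bound. Because $\x^*$ is only assumed to lie in the (possibly non-singleton) argmin set, the conclusion should be read as a bound against the nearest such minimizer, and no additional argument is required for this since both PL and QG are stated with respect to the common optimal value $f^*$.
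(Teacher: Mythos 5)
Your proof is correct and follows essentially the same route as the paper: both combine the quadratic-growth bound of Lemma~\ref{lemma:QG-condition} with the PL inequality to relate $\|\x - \x^*\|$ to $\|\nabla f(\x)\|$ with the constant $2\mu$, the only difference being that you eliminate the function gap before taking square roots while the paper chains the two square-root inequalities directly. Your closing remark about reading $\x^*$ as the nearest minimizer is a reasonable reading of the (equally loosely stated) quadratic-growth lemma in the paper, so no further argument is needed.
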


\begin{proof}
By Lemma~\ref{lemma:QG-condition} and the definition of the PL condition we have that:

$$\| \x - \x^* \| \leq \sqrt{\frac{1}{2\mu} (f(\x) - f^*)} \leq \frac{1}{2\mu} \| \nabla f(\x) \| \leq \frac{1}{2 \mu} \epsilon.$$
\end{proof}

\begin{lemma} \label{lemma:min-max-connection}
(Lemma A.3 from~\cite{nouiehed2019solving}) Assume that $ - f(\x, \y)$ for a specific $\x$, is a class of $\mu$-PL functions in $\y$. Define the set of optimal solutions $\mathbbm{Y}(\x) = \argmax_{\y} f(\x, \y)$. Then for every $\x_1, \x_2 \in \mathcal{X}$ and $\y_1^* \in \mathbbm{Y}(\x_1), \y_2^* \in \mathbbm{Y}(\x_2)$ it holds that:

$$\| \y_1^* - \y_2^*\| \leq L_{xy} \| \x_1 - \x_2 \|,$$

where we denote with $L_{xy} = \frac{L_{12}}{2 \mu}$.
\end{lemma}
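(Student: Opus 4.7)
The plan is to chain together three ingredients: (i) the quadratic growth (QG) consequence of the PL condition (Lemma~\ref{lemma:QG-condition}), (ii) the PL inequality itself evaluated at a suboptimal point, and (iii) the cross Lipschitz bound on $\nabla_\y f$ from Assumption~\ref{ass:lipschitz}. The key observation is that since $-f(\x,\cdot)$ is $\mu$-PL, its minimum $\y^*(\x) \in \mathbb{Y}(\x)$ must satisfy the stationarity condition $\nabla_\y f(\x,\y^*(\x)) = \mathbf{0}$ (the PL inequality forces the gradient to vanish whenever the function value equals its optimum).

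First I would fix $\x_1,\x_2$ and the corresponding maximizers $\y_1^*, \y_2^*$, and then apply the PL condition for the function $\y \mapsto -f(\x_1,\y)$ evaluated at the (suboptimal) point $\y_2^*$. This gives
\[
f(\x_1,\y_1^*) - f(\x_1,\y_2^*) \;\leq\; \frac{1}{2\mu}\,\| \nabla_\y f(\x_1,\y_2^*) \|^2.
\]
Next, I would use the stationarity $\nabla_\y f(\x_2,\y_2^*) = \mathbf{0}$ together with the cross Lipschitz bound from Assumption~\ref{ass:lipschitz} to get
\[
\| \nabla_\y f(\x_1,\y_2^*) \| \;=\; \| \nabla_\y f(\x_1,\y_2^*) - \nabla_\y f(\x_2,\y_2^*) \| \;\leq\; L_{12}\,\|\x_1 - \x_2\|.
\]
Combining these two bounds yields $f(\x_1,\y_1^*) - f(\x_1,\y_2^*) \leq \tfrac{L_{12}^2}{2\mu}\|\x_1 - \x_2\|^2$.

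To close the argument, I would invoke the QG consequence of PL (Lemma~\ref{lemma:QG-condition}) applied to $-f(\x_1,\cdot)$, whose minimizer is $\y_1^*$. This gives a lower bound of the form
\[
f(\x_1,\y_1^*) - f(\x_1,\y_2^*) \;\geq\; 2\mu \, \|\y_1^* - \y_2^*\|^2.
\]
Putting the upper and lower bounds together produces $\|\y_1^* - \y_2^*\|^2 \leq \tfrac{L_{12}^2}{4\mu^2}\|\x_1 - \x_2\|^2$, which upon taking square roots is precisely the claim with $L_{xy} = L_{12}/(2\mu)$.

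The only subtle point (and the main thing to verify carefully) is that the QG constant coming from Lemma~\ref{lemma:QG-condition} is $4\mu$ rather than $2\mu$ (as stated in that lemma), so the quadratic lower bound really does carry a factor of $2\mu\|\y_1^*-\y_2^*\|^2$ on the right-hand side; this makes the constants line up exactly to yield $L_{xy} = L_{12}/(2\mu)$. A minor additional point is to justify that PL indeed implies $\nabla_\y f(\x_i,\y_i^*) = \mathbf{0}$: substituting any $\y_i^* \in \mathbb{Y}(\x_i)$ into the PL inequality $\tfrac{1}{2}\|\nabla_\y f(\x_i,\y)\|^2 \geq \mu(\max_{\y'} f(\x_i,\y') - f(\x_i,\y))$ forces the gradient to vanish at maximizers, so the cross Lipschitz step is legitimate even though $\mathbb{Y}(\x)$ may not be a singleton.
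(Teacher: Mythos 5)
The paper does not actually prove this lemma; it is imported verbatim as Lemma A.3 of \cite{nouiehed2019solving}, and your argument reproduces the standard proof of that result: PL applied to $-f(\x_1,\cdot)$ at the suboptimal point $\y_2^*$, the cross-Lipschitz bound $\|\nabla_\y f(\x_1,\y_2^*)-\nabla_\y f(\x_2,\y_2^*)\|\le L_{12}\|\x_1-\x_2\|$ combined with stationarity of $\y_2^*$, and the quadratic-growth lower bound $f(\x_1,\y_1^*)-f(\x_1,\y_2^*)\ge 2\mu\|\y_1^*-\y_2^*\|^2$. The constants line up to give $L_{xy}=L_{12}/(2\mu)$, so the proof is correct in substance. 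Two points deserve correction or caution. First, your justification of $\nabla_\y f(\x_i,\y_i^*)=\mathbf{0}$ is backwards: the PL inequality is a \emph{lower} bound on the gradient norm, and at a maximizer it reduces to $\tfrac12\|\nabla_\y f\|^2\ge 0$, which is vacuous and forces nothing. The stationarity you need is simply Fermat's rule --- $\y_i^*$ is an unconstrained global maximizer of a differentiable function on $\R^{|\mathcal{Y}|}$ (Assumption~\ref{ass:bounded}), hence a critical point. Second, quadratic growth (Lemma~\ref{lemma:QG-condition}) only controls the distance to the \emph{nearest} point of the solution set, so your final step literally yields $\mathrm{dist}(\y_2^*,\mathbbm{Y}(\x_1))\le L_{xy}\|\x_1-\x_2\|$ rather than a bound for an arbitrary pair $(\y_1^*,\y_2^*)$; indeed the lemma as stated is false for non-singleton $\mathbbm{Y}(\x)$ (take $\x_1=\x_2$ and two distinct maximizers). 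This imprecision is inherited from the paper's own statements of both lemmas, so it is not a defect of your argument specifically, but the clean statement is the one phrased in terms of distance to the solution set.
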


Next, we will need to establish a lower bound on the step size $\Sigma$. In the deterministic case, Lemma~\ref{lemma:unsuccessful-step} establishes such a lower bound for unsuccessful steps, guaranteeing that if $\Sigma = \Sigma_{\epsilon}$, then $\| \nabla f(\x) \| \leq \epsilon$. However, in the stochastic case, inaccurate steps may occur. We want to ensure a lower bound on the step size parameter with high probability.

\begin{figure}[t!]
    \centering
    \includegraphics[width=0.8\textwidth]{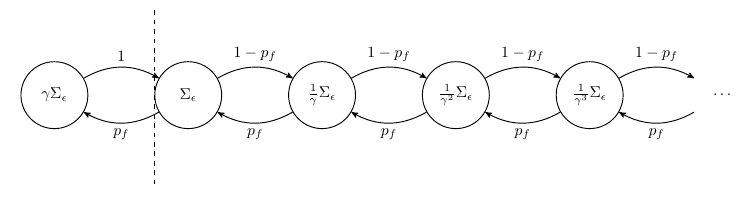}
    \caption{Worst case scenario for step sizes. Ignoring steps for $\Sigma > \Sigma_{\epsilon}$, it corresponds to a biased reflected random walk. The dotted line indices the barrier at position 0, indicating a step size of $\Sigma_{\epsilon}$.}
    \label{fig:markov-chain}
\end{figure}

To do so, we will consider the worst-case scenario where step sizes get as small as possible. This corresponds to the case where for all step sizes $\Sigma > \Sigma_{\epsilon}$, unsuccessful steps occur. So do all of the inaccurate estimates, with probability $1 - p_f$. For convenience, we will ignore steps above the value $\Sigma_{\epsilon}$ since we only require a bound. This corresponds to a random walk with a reflection barrier at position $0$ (which corresponds to the step size $\Sigma_{\epsilon}$) and an increment probability $1 - p_f$, where $p_f$ is the probability of accurate estimates. We, therefore, use the following Lemma to get a probabilistic lower bound on the step sizes.

% In the min-max setting we seek to bound the ascent of the objective function due to the maximization steps, by the step size of the min problem. For the deterministic case, we have a bound on the size of the step sizes, a direct consequence of Lemma~\ref{lemma:unsuccessful-step}. For the stochastic case, due to inaccurate estimates, this bound does not hold. By reducing the step size updates to a reflected random walk, however, the following Lemma gives a probabilistic bound.

\begin{lemma}  \label{lemma:random-walk}
Let a random walk starting at position 0, with a reflection barrier at position $0$ and a transition probability matrix 
\begin{equation*}
    \begin{bmatrix}
    p_f & 1 - p_f & & & \\
    p_f & 0 & 1 - p_f & & \\
    & p_f & 0 & 1 - p_f & \\
    & & \ddots & \ddots & \ddots \\
    \end{bmatrix}
\end{equation*}
for $p_f > \frac{1}{2}$. Then for $k \geq \frac{\log(1 - e^{\frac{1}{n}\log(\delta)})}{\log\left(\frac{1 - p_f}{p_f}\right)} - 1$, the random walk of length $n$, stays confined within the space $[0, k]$ with a probability at least $\delta > 0$.
\end{lemma}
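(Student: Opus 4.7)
The plan is to show $P(\max_{0 \le i \le n} X_i \le k) \ge \delta$ by reducing to a cleaner product form. Set $r := (1-p_f)/p_f < 1$. Since $\log r < 0$, exponentiating the hypothesis on $k$ gives $r^{k+1} \le 1 - \delta^{1/n}$, i.e.\ $(1 - r^{k+1})^n \ge \delta$. It therefore suffices to establish
\[
P\!\left(\max_{0\le i\le n} X_i \le k\right) \;\ge\; (1 - r^{k+1})^n.
\]

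My approach is to decompose the walk into excursions from state $0$. Define the $j$-th excursion as the walk segment that begins with the $j$-th up-step from $0$ and ends at its first subsequent return to $0$; by the strong Markov property these excursions are i.i.d.\ copies of a generic excursion. Let $M_j$ denote the maximum height of excursion $j$ and set $p_{\rm exc} := P(M_1 \ge k+1)$. A standard gambler's-ruin computation on a biased walk over $\{0,1,\ldots,k+1\}$ starting at $1$ (up prob $1-p_f$, down prob $p_f$) yields hitting probability $(1-r)r^k/(1-r^{k+1})$ at the top. Multiplying by the probability $1-p_f = p_f r$ that the excursion is launched at all gives
\[
 p_{\rm exc} \;=\; \frac{p_f(1-r)\, r^{k+1}}{1 - r^{k+1}} \;=\; \frac{(2p_f - 1)\, r^{k+1}}{1 - r^{k+1}}.
\]

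The key algebraic step is $p_{\rm exc} \le r^{k+1}$, which reduces to $2p_f - 1 \le 1 - r^{k+1}$; since $r^{k+1} \le r \le 2(1-p_f)$ (and the last inequality is just $p_f \ge 1/2$), this indeed holds. Finally, each excursion consumes at least one transition, so at most $n$ excursions are initiated within the $n$-step trajectory. Thus $\{\max_{i\le n} X_i \ge k+1\}$ is contained in $\{M_j \ge k+1 \text{ for some } 1 \le j \le n\}$, and i.i.d.-ness of the $M_j$ yields
\[
 P\!\left(\max_{i \le n} X_i \le k\right) \;\ge\; (1 - p_{\rm exc})^n \;\ge\; (1 - r^{k+1})^n \;\ge\; \delta,
\]
as required.

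The main obstacle is the calibration $p_{\rm exc} \le r^{k+1}$: without this observation one is stuck with the weaker union-bound estimate $1 - n r^{k+1}$ (same asymptotic order but strictly looser than the target product form). A secondary subtlety is that the last excursion may still be in progress at time $n$; this is absorbed by comparing instead to the first $n$ (possibly hypothetical) i.i.d.\ excursion maxima, which only inflates the probability of ever reaching $k+1$.
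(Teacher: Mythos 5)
Your proof is correct in substance and takes a genuinely different route from the paper's. The paper never decomposes into excursions: it works with the marginal distributions $p_{i,n}$ of the reflected walk, proves by induction the ratio bound $p_{i,n} \ge \frac{p_f}{1-p_f}\, p_{i+1,n}$, deduces that $\sum_{i\le k} p_{i,n}$ is nonincreasing in $n$ and hence bounded below by the stationary mass $\pi_{\le k} = 1 - r^{k+1}$ (with $r = \tfrac{1-p_f}{p_f}$), and then chains conditional probabilities to obtain $\mathsf{P}(S_i \le k,\ \forall i \le n) \ge \pi_{\le k}^{\,n}$, after which the calculation of $k$ from $(1-r^{k+1})^n \ge \delta$ is the same as yours. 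You instead use a renewal/excursion decomposition at the origin plus a gambler's-ruin computation, and your calibration $p_{\rm exc} = \frac{(2p_f-1)r^{k+1}}{1-r^{k+1}} \le r^{k+1}$ lands on exactly the same quantity $(1-r^{k+1})^n$ that the paper interprets as stationary mass to the $n$-th power; your intermediate bound $(1-p_{\rm exc})^n$ is in fact slightly sharper, and the argument avoids both the monotonicity induction and the somewhat delicate conditional-probability chaining step of the paper. One point to tighten: your trials must be indexed by visits to $0$ (each step taken from the origin is one attempt, which launches an excursion with probability $1-p_f$), not by up-steps from $0$ as your opening sentence states. Under up-step indexing, $P(M_1 \ge k+1)$ is the bare ruin probability $(1-r)r^k/(1-r^{k+1})$ without the factor $1-p_f$, and the key inequality $\le r^{k+1}$ can then fail (e.g.\ for small $r$). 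Under visit indexing the factor $1-p_f = p_f r$ is exactly right, at most $n$ attempts are initiated within $n$ steps, the strong Markov property gives the i.i.d.\ structure, and the treatment of an excursion still in progress at time $n$ by hypothetical completion is fine, so the proof closes as you wrote it.
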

\begin{proof}
Let a random walk $S_n = \max\{ S_{n - 1} + X_n, 0\}$, with $S_0 = 0$ and $\mathsf{P}(X_{n} = 1) = 1 - p_f$, $\mathsf{P}(X_{n} = -1) = p_f$, for $p_f > \frac{1}{2}$. The probability that the random walk stays until position $k$, $\mathsf{P}(S_i \leq k, \, \forall{i \leq n})$, is bounded below by the probability of $n$ randomly chosen points from the stationary distribution to be at positions lower or equal to k.

Let us denote with $p_{i,n}$ the probability that the random walk is at position $i$ after $n$ total steps. We first prove by induction that 

\begin{equation}
    p_{i,n} \geq \frac{p_f}{1 - p_f} p_{i + 1, n}. \label{eq:increase-n-inequality}
\end{equation}

It obviously holds for $n = 0$, as $p_{0,0} = 1$ and $p_{i, 0} = 0,\, \forall i \geq 1$. Assume that it holds for $n$. As shown in Fig.~\ref{fig:markov-chain}, with probability $(1-p_f)$, position $i$ is incremented, therefore for $i \geq 1$
\begin{align*}
    p_{i, n+1} &= p_{i - 1, n} (1 - p_f) + p_{i + 1,n} p_f \\
    &\geq \frac{p_f}{1 - p_f} p_{i, n} (1 - p_f) + \frac{p_f}{1 - p_f} p_{i + 2, n} p_f, \text{ by induction} \\
    &= \frac{p_f}{1 - p_f} (p_{i, n} (1 - p_f) + p_{i + 2, n} p_f) \\
    &= \frac{p_f}{1 - p_f} p_{i+1, n + 1}
\end{align*}
    
and for $i=0$

\begin{align*}
    p_{0, n + 1} &= p_{0, n} p_f + p_{1,n} p_f \\
    &\geq p_{0,n} p_f + \frac{p_f}{1 - p_f} p_{2, n} p_f , \text{ by induction} \\
    &= \frac{p_f}{1 - p_f} (p_{0, n} (1 - p_f) + p_{2, n} p_f) \\
    &= \frac{p_f}{1 - p_f} p_{1, n + 1}.
\end{align*}

Let us now consider the probability that the random walk resides in the first $k$ positions. Then:

\begin{align*}
    \sum_{i = 0}^k p_{i, n + 1} &= p_{0, n} p_f + p_{1, n} p_f + \sum_{i = 1}^k (p_{i - 1, n} (1 - p_f) + p_{i + 1, n} p_f) \\
    &= \sum_{i=0}^k p_{i, n} - p_{k, n} (1 - p_f) + p_{k + 1, n} p_f \\
    &\leq \sum_{i=0}^k p_{i, n}, \, \text{ by } \eqref{eq:increase-n-inequality},
\end{align*}
where the equality in the second line is due to the terms telescoping in the sum in the first line.

As a result, we can lower bound the probability $\sum_{i = 0}^k p_{i, n}$ with the corresponding one for $n \to \infty$, which corresponds to a stationary distribution. Also

\begin{align*}
    \mathsf{P}(S_i \leq k \mid S_{i - 1} \leq k) &= \mathsf{P}(S_i \leq k \mid S_{i - 1} = k) \,\mathsf{P}(S_{i - 1} = k) + \mathsf{P}(S_i \leq k \mid S_{i - 1} < k)\, \mathsf{P}(S_{i - 1} < k) \\
    &= p_f\, \mathsf{P}(S_{i - 1} = k) + \mathsf{P}(S_{i - 1} < k) \geq p_f
\end{align*}

and

\begin{align*}
    \mathsf{P}(S_i \leq k \mid S_{i - 1} > k) &= \mathsf{P}(S_i \leq k \mid S_{i - 1} = k + 1)\, \mathsf{P}(S_{i - 1} = k + 1) \\
    &\qquad \qquad \qquad \qquad + \mathsf{P}(S_i \leq k \mid S_{i - 1} > k + 1)\, \mathsf{P}(S_{i - 1} > k + 1) \\
    &= p_f\, \mathsf{P}(S_{i - 1} = k + 1) + 0\, \mathsf{P}(S_{i - 1} > k + 1) \\
    &\leq p_f\, \leq \mathsf{P}(S_i \leq k \mid S_{i - 1} \leq k).
\end{align*}

As a result

\begin{align}
    \mathsf{P}(S_i \leq k) &= \mathsf{P}(S_i \leq k \mid S_{i - 1} \leq k) \,\mathsf{P}(S_{i - 1} \leq k) + \mathsf{P}(S_i \leq k \mid S_{i - 1} > k) \,\mathsf{P}(S_{i - 1} > k) \nonumber \\
    &\leq \mathsf{P}(S_i \leq k | S_{i - 1} \leq k). \label{eq:conditional-bigger}
\end{align}

The probability of a random walk of length $n$ to stay between the first $k \geq 0$ positions is thus
\begin{align*}
    \mathsf{P}(S_i \leq k, \, \forall{i \leq n}) &= \mathsf{P}(S_0 \leq k) \prod_{i = 1}^n \mathsf{P}(S_i \leq k \mid S_{j} \leq k, \, \forall j \in [0, i - 1]) \\
    &= \prod_{i = 1}^n \mathsf{P}(S_i \leq k \mid S_{i - 1} \leq k), \text{ with } \mathsf{P}(S_0 \leq k) = 1, \, \forall k \geq 0 \\
    &\geq  \prod_{i = 1}^n \mathsf{P}(S_i \leq k), \text{ by Eq. }\eqref{eq:conditional-bigger}\\
    &= \prod_{j=1}^n \left(\sum_{i=0}^k p_{i, j}\right) \\
    &\geq \left(\sum_{i=0}^k \pi_i \right)^n,
\end{align*}
where $\pi_i$ denotes the stationary probability of the random walk for $i \in \mathbb{N}$. From the recursive relation, we get $\pi_i = (\frac{p_f}{1 - p_f}) \pi_{i + 1}$, which means $\pi_i = (\frac{1 - p_f}{p_f})^i \pi_0$. 

We now calculate the probability $\pi_{\leq k}$ of a randomly chosen point to be part of the first $k$ positions for the stationary distribution
\begin{align*}
    \pi_{\leq k} = \sum_{i=0}^k \pi_i  = \frac{\pi_0 \sum_{i=0}^k (\frac{1 - p_f}{p_f})^i}{\pi_0 \sum_{i=0}^{\infty} (\frac{1 - p_f}{p_f})^i} = 1 - \left(\frac{1 - p_f}{p_f}\right)^{k + 1}.
\end{align*}
Thus the required probability must be lower bounded by
\begin{align}
    \pi_{\leq k}^n \geq \delta \implies \left(1 - \left(\frac{1 - p_f}{p_f}\right)^{k + 1}\right)^n &\geq \delta \nonumber \\
    \log\left(1 - \left(\frac{1 - p_f}{p_f}\right)^{k + 1}\right) &\geq \frac{1}{n} \log(\delta) \nonumber \\
    \left(\frac{1 - p_f}{p_f}\right)^{k + 1} &\leq 1 - e^{\frac{1}{n}\log(\delta)} \nonumber \\
    k &\geq \frac{\log\left(1 - e^{\frac{1}{n}\log(\delta)}\right)}{\log\left(\frac{1 - p_f}{p_f}\right)} - 1,
\end{align}
where for the last step we used the fact that $\log \left(\frac{1 - p_f}{p_f} \right) < 0$ since $p_f > \frac12$ implies $\frac{1 - p_f}{p_f} < 1$.
\end{proof}

We can now move to the proof of Theorem~\ref{theorem:min-max-convergence}.
\begin{proof}
We denote with $c_x$ the constant used for the sufficient decrease condition of the min problem. We first prove the deterministic case. In the deterministic case, Theorems~\ref{theorem:convergence-nonconvex} and \ref{theorem:convergence-pl} hold deterministically, meaning that we can reduce the norm of the gradient below a threshold $\epsilon$ for the nonconvex case in $\mathcal{O}(\epsilon^{-2})$ iterations and for the case that the function satisfies our PL condition in $\mathcal{O}(\log(\epsilon^{-1}))$ iterations.

At each step, the max problem is solved almost exactly, which is guaranteed by Theorem~\ref{theorem:convergence-pl} and Algorithm~\ref{algo:direct-search}. Then 

\begin{equation*}
    \| \nabla_{\y} f(\x_{t-1}, \y_{t}) \| \leq \epsilon^{\max},
\end{equation*}

for an accuracy $\epsilon^{\max}$ to be specified later. In the proof, we will show that for a particular choice of a forcing function constant, the improvement on the minimization problem is better than possible deterioration caused by the updates of the max problem. By Assumption~\ref{ass:lipschitz} of Lipschitz continuity 

\begin{align}
    \| \nabla_{\y} f(\x_t, \y_t) - \nabla_{\y} f(\x_{t-1}, \y_t) \| &\leq L_{12} \| \x_t - \x_{t-1} \| = L_{12} \sigma_t \nonumber \\
    \implies \| \nabla_{\y} f(\x_t, \y_t)\| &\leq L_{12} \sigma_t + \epsilon^{\max}, \label{eq:nabla_y}
\end{align}
for a successful update. Here $\sigma_t$ is used to denote the step size used for the minimization step throughout Algorithms~\ref{algo:min-max-DR} and~\ref{algo:one-step-DR}. We note that $\sigma_t$ always belongs to a successful step, by the notation used in Algorithm~\ref{algo:min-max-DR}. Also by triangle inequality we have that (let $\y_{t}^*$ and $\y_{t+1}^*$ belong to the optimal solution sets at iterations $t$ and $t+1$ respectively)

\begin{align}
    \| \y_{t+1} - \y_t \| &= \| \y_{t+1} - \y_{t+1}^* + \y_{t+1}^* - \y_t^* + \y_t^* - \y_t \|  \nonumber \\ 
    &\leq \| \y_{t+1} - \y_{t+1}^* \| + \| \y_{t+1}^* - \y_t^* \| + \| \y_t^* - \y_t \|. 
\end{align}

By Lemma~\ref{lemma:min-max-connection} we have that $\| \y_{t+1}^* - \y_t^* \| \leq L_{xy} \sigma_t$, since $\y_{t+1}^* \in \mathbbm{Y}(\x_{t})$ and $\y_{t}^* \in \mathbbm{Y}(\x_{t-1})$ (we remind that $\mathbbm{Y}(\x) = \argmax_{\y}f(\x, \y)$). Also, as a consequence of Definition~\ref{def:PL-condition} and Lemma~\ref{lemma:pl-epsilon} we have that both

\begin{equation*}
    \| \y_{t+1} - \y_{t+1}^* \| \leq \frac{\epsilon^{\max}}{2 \mu} \quad \text{ and } \quad \| \y_{t} - \y_{t}^* \| \leq \frac{\epsilon^{\max}}{2 \mu}.
\end{equation*}

As a result

\begin{align}
    \| \y_{t+1} - \y_t \| \leq \frac{\epsilon^{\max}}{\mu} + L_{xy} \sigma_t. \label{eq:y_distance}
\end{align}

Finally, for a successful update of the Algorithm~\ref{algo:one-step-DR} we have

\begin{align}
     f(\x_{t}, \y_{t + 1}) - f(\x_{t}, \y_{t}) &\leq \langle \nabla_{\y} f(\x_{t}, \y_{t}), \y_{t+1} - \x_t \rangle + \frac{L_{22}}{2} \| \x_{t+1} - \x_t \|^2 \nonumber \\
     &\leq (L_{12} \sigma_t + \epsilon^{\max}) (L_{xy}\sigma_t + \frac{\epsilon^{\max}}{\mu}) \nonumber \\ 
     &\quad + \frac{L_{22}}{2} (L_{xy}\sigma_t + \frac{\epsilon^{\max}}{\mu})^2 \nonumber \\
     &= D_1 \sigma_t^2 + D_2 \sigma_t \epsilon^{\max} + D_3 (\epsilon^{\max})^2,
\end{align}

for $D_1 \triangleq L_{12}L_{xy} + \frac{L_{22}}{2} L_{xy}^2 $, $D_2 = \frac{L_{12}}{\mu} + L_{xy} + \frac{L_{22}L_{xy}}{\mu}$ and $D_3 = \frac{1}{\mu}(1 + \frac{L_{22}}{2\mu})$. During the updates of the minimization problem we have that
\begin{equation*}
    \sigma \geq \sigma_{\min} = \sigma_{\epsilon}, \, \text{ with } \sigma_{\epsilon} = C \epsilon.
\end{equation*}
Here $C$, which is defined in Lemma~\ref{lemma:unsuccessful-step}, entails the constants for the min problem. We want to ensure that

\begin{equation}
    f(\x_{t}, \y_{t + 1}) - f(\x_{t - 1}, \y_{t}) < - K \sigma_t^2,
\end{equation}

for some $K > 0$ and for $\sigma_t \geq \sigma_{\min}$, to then apply Theorem~\ref{theorem:pfi-k-nonconvex}, for $f^*$ the minimum of $f$ at each $\y_t$. Taking also into account our sufficient decrease condition, we want to make sure that the following holds for the polynomial $p$

\begin{equation}
    p(\sigma_t) \triangleq K \sigma_t^2 - c_x \sigma_t^2 + D_1 \sigma_t^2 + D_2 \sigma_t \epsilon^{\max} + D_3 (\epsilon^{\max})^2 \leq 0
\end{equation}

for every $\sigma_t \geq \sigma_{\min}$. To establish this we just need to ensure that for the quadratic with negative second degree coefficient (for $c_x > D_1 + K$) the maximum occurs at position:

\begin{equation}
    \frac{D_2 \epsilon^{\max}}{2(c_x - K - D_1)} \leq C \epsilon \implies \epsilon^{\max} \leq \epsilon \frac{2C(c_x - K - D_1)}{D_2} \label{eq:e-max-con1}
\end{equation}

and also that

\begin{align}
    p(C \epsilon) &\leq 0 \iff \nonumber \\
    (- c_x + K + D_1)C^2\epsilon^2 + D_2C \epsilon \epsilon^{\max} + D_3(\epsilon^{\max})^2 &\leq 0.
\end{align}

For the final condition to hold

\begin{equation}
\epsilon^{\max} \leq \epsilon \frac{C (- D_2 + \sqrt{D_2^2 + 4(c_x - K -D_1) D_3}}{2D_3}.
\end{equation}

In the stochastic case, we apply Theorems~\ref{theorem:convergence-nonconvex} and \ref{theorem:convergence-pl} as is to get the expected number of steps. In this case however, the step size may become smaller than the pre-specified $\sigma_{\min}$ parameter, due to inaccurate estimates. We can then use Lemma~\ref{lemma:random-walk}, to get a bound with high probability, regarding this minimum step size value. More specifically for the number of iterates $n$ specified by Theorem~\ref{theorem:convergence-nonconvex}, for $k \geq \frac{\log(1 - e^{\frac{1}{n}\log(\delta)})}{\log(\frac{1 - p_x}{p_x})} - 1$, throughout the updates
\begin{equation*}
    \sigma \geq \sigma_{\min}^{\prime} = \frac{1}{\gamma^k} \sigma_{\min},
\end{equation*}
with probability at least $\delta > 0$, where $\gamma$ is the update parameter for the min problem in Algorithm~\ref{algo:one-step-DR}. We then get the similar bounds

\begin{equation*}
    \epsilon^{\max} \leq \epsilon \,\min\left\{ \frac{2C(c_x - K - D_1)}{\gamma^k D_2}, \; \frac{C (- D_2 + \sqrt{D_2^2 + 4(c_x - K -D_1) D_3}}{2D_3\gamma^k}\right\}.
\end{equation*}

We note that $K$ acts as a new sufficient decrease constant and should be taken into account for all assumptions of Theorem~\ref{theorem:convergence-nonconvex}, namely $K > 2 \epsilon_x$, which holds for the constant $c_x > D_1 + K > D_1 + 2 \epsilon_x$.

\end{proof}

%%%%%%%%%%%%%%%%%%%%%%%%%%%%%%%%%%%%%%%%%%%%%%%%%%%%%%%%%%%%%%%%%%%%%%%%%%

\section{Experimental Setup} \label{app:exp}

\subsection{Robust Optimization}

The Wisconsin breast cancer data set, is a binary classification task with 569 samples in total, each having 30 attributes. We use a simple neural network with a hidden layer of size 50 and a LeakyReLU activation. This choice of activation accommodates the GDA baseline providing additional gradient information. All networks across methods and folds are initialized with the same weights. For the GDA method we tried a range of different learning rates from the set \{0.1, 0.05, 0.01, 0.005, 0.001, 0.0005\}, but only present results for the cases that converged. 

In Fig.~\ref{fig:robust-speed-of-convergence} we present the evolution of the zero-one error across epochs for each method. We stress that one epoch for the GDA approach corresponds to one update each for the max and the min problem, whereas one epoch for DR corresponds to a series of updates for the max problem (at most 10) followed by a single update for the min problem. GDA was run for a total of 10000 epochs and DR for a maximum of 2000 epochs but usually converges a lot faster than that. GDA suffers considerably more by poor initializations compared to DR. In Fig.~\ref{fig:robust-speed-of-convergence} constant large errors correspond to a constant output of the network for a specific class of the problem (for this unbalanced dataset with rates 0.63 and 0.37).

\begin{figure}[!ht]
    \centering
    \includegraphics[width=1\textwidth]{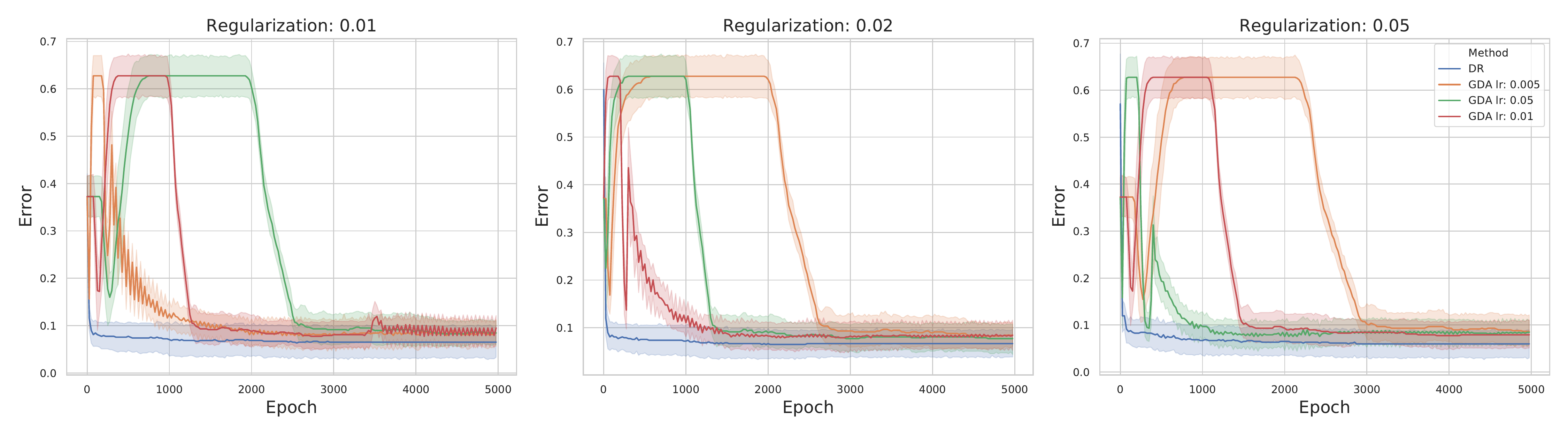}
    \caption{Misclassification error across epochs for each method.}
    \label{fig:robust-speed-of-convergence}
\end{figure}

% \subsection{Generative Adversarial Networks}

\subsection{Toy Examples}

Although in the examples following, the objective of the max player is nonconcave and does not satisfy the PL condition, empirical results demonstrate that the proposed algorithm can be successful. We begin by illustrating examples of GANs learning different 2D underlying distributions for a continuous case in Fig.~\ref{fig:mixtures-continuous-case}. Both the generator and the discriminator have 2 hidden layers of size 20 (64 for learning a mixture of Gaussian in a grid formation) with Tanh activations, while we also use spectral normalization for the discriminator. In all scenarios, we sample the latent code from a lower-dimensional space $N(0, I_2)$, such that it matches the data dimensionality, allowing the generator to learn a simpler mapping (as in~\cite{grnarova2017online}).

\begin{figure}[!ht]
    \centering
    \includegraphics[width=1\textwidth]{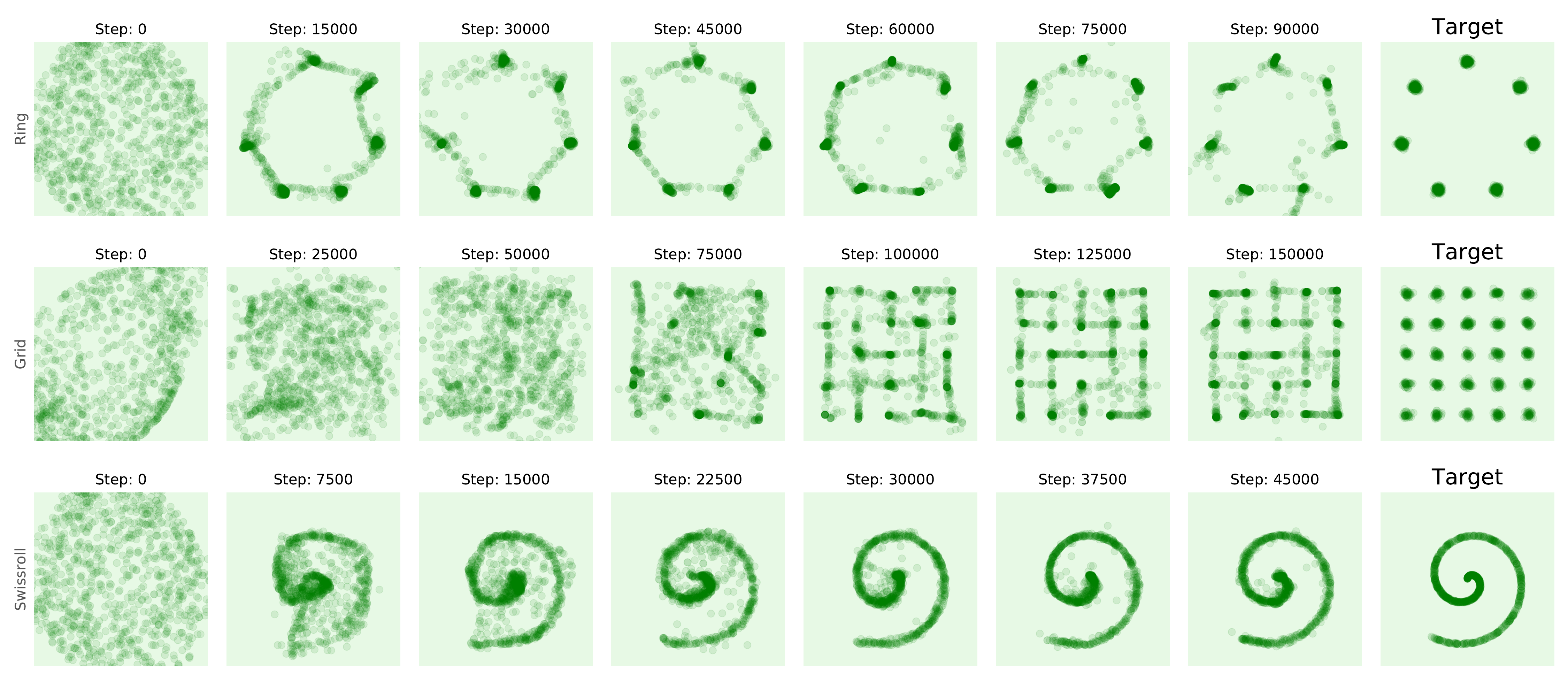}
    \caption{Mode collapse check for direct-search methods for three different problems in the continuous setting.}
    \label{fig:mixtures-continuous-case}
\end{figure}

Motivated by encouraging results, we proceed in a discrete setting, where each of the 2 dimensions of the underlying distributions is parametrized by a categorical variable. The choice of this categorical variable makes the objective function of the generator nondifferentiable. As aforementioned, our algorithm can support multi-categorical data. In the current literature, the most popular methods to deal with this kind of scenario are baselines based on the Gumbel-softmax or the REINFORCE algorithm. Due to their sampling techniques though, dependence on the number of parameters of the model is exponential for these baselines. 

We describe shortly how training is performed for each of the baselines used. Based on the output logits $o$ of size $n$, the result of a projection layer, each method samples a new point $y$.

\paragraph{Gumbel-softmax}
Using the Gumbel-max trick, the sampling can be parametrized as

\begin{equation}
    y = \text{one-hot}(\argmax_{1 \leq i \leq n}(o^{(i)} + g^{(i)})),
\end{equation}

where $g$ are sampled from the \textit{i.i.d.} Gumbel distribution. To enable the calculation of gradients, this is relaxed to the result of a softmax operation

\begin{equation}
    \hat{y} = \sigma(\frac{o + g}{T}),
\end{equation}

with $T > 0$, the temperature, controlling the softness of the sampling. A high initial temperature value forces more exploration. As the temperature decreases, $\hat{y}$ becomes a better approximation of $y$, leading however to steeper gradients and more instabilities. This is also the reason why gradient clipping is crucial for the stability of this method. Untimely updates of the temperature have been known to bolster mode collapse. For all experiments, we use an exponential decay update scheme, decreasing the temperature after a predefined number of steps.

\paragraph{REINFORCE} We sample a new point $s$ from the output logits and appoint a specific reward according to the output of the discriminator $r = 2*(\D(s) - 0.5)$ (we remind that the discriminator uses a sigmoid output), rewarding positively samples that manage to fool the discriminator and negatively those that fail to do so. Subtracting the baseline value of $0.5$ helps reduce variance. To alleviate the large variance introduced by the sampling, we increase the number of steps taken by the generator compared to the other methods.

\paragraph{Direct-search} Direct-search method just chooses the output with the highest probability

\begin{equation}
    y = \text{one-hot}(\argmax_{1 \leq i \leq n}o^{(i)}).
\end{equation}

For the discrete toy example illustrated, we draw samples from an evenly weighted and evenly spaced mixture of Gaussians with 7 components in a 2-dimensional space. The 2d sample is then discretized, according to a specific level, leading to 51 possible values for each of the two dimensions of the problem. These categorical data-points are then transformed to the corresponding continuous one, by a linear mapping, and given as input to the discriminator. This leads to an ordinal relationship between the categorical points (we only display ordinal data for visualization purposes). To monitor the learning curve of the generator compared to the true distribution, we also calculate the Hellinger distance, which is a nonparametric method that calculates the difference of two discrete distributions as
\begin{equation}
    \mathcal{H} (\mathcal{P}, \mathcal{Q}) = \frac{1}{\sqrt{2}} \sqrt{\sum_{i=1}^k(\sqrt{p_i} - \sqrt{q_i})^2},
\end{equation}
\vspace{-1mm}

as well as the maximum mean discrepancy, comparing the generated samples with the underlying data. Note that this example suffers from stochasticity due to the way samples are collected and batches are created. Results for all baselines are provided in Figure~\ref{fig:mixtures-discrete-case}.

\begin{figure}[!ht]
    \centering
    \includegraphics[width=1\textwidth]{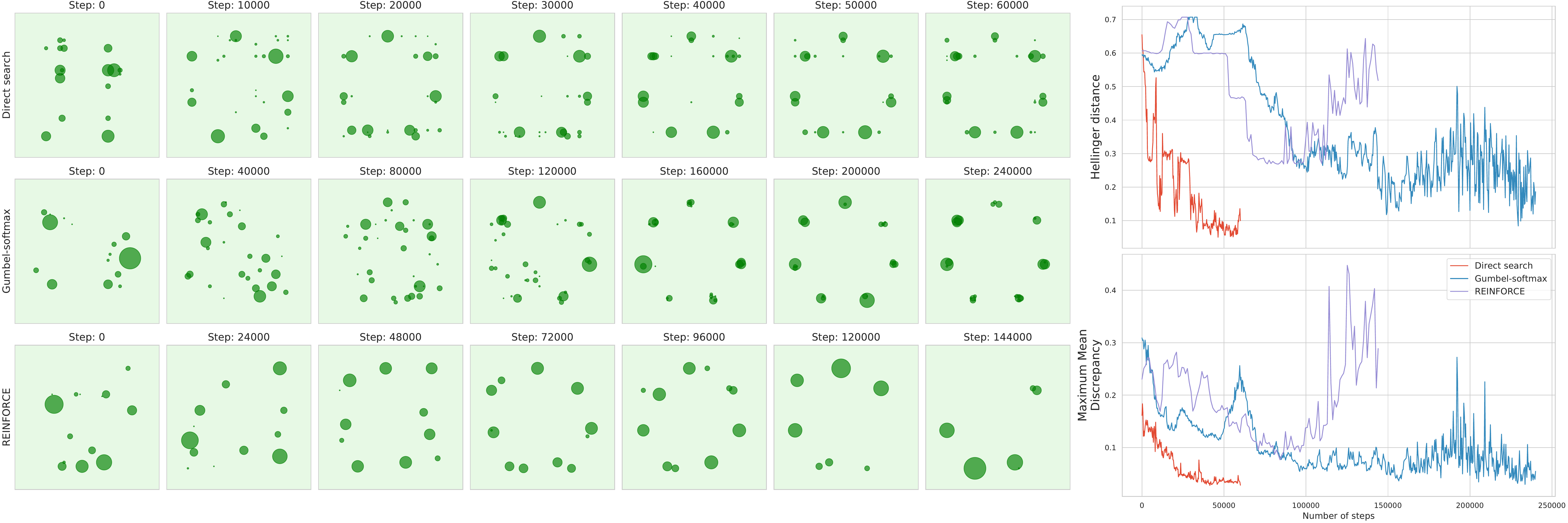}
    \caption{Comparison of direct-search with baselines for the discrete toy example.}
    \label{fig:mixtures-discrete-case}
\end{figure}

When using DR, to overcome the nonsmoothness of the objective function, increasing the $\ell_2$ regularization can help with the convergence. Too large of an increase inevitably leads to mode collapse. On the other hand, too small of a regularization parameter requires a large enough step size parameter to enable progress and thus can make learning more difficult. In general, we found that DR converges to similar solutions for a wide range of regularization parameter choices without significant variation in terms of the number of steps required to do so.
\end{appendices}

\end{document}